%
%
%
%
\documentclass{amsart}


\usepackage{kpfonts}
\usepackage{amsrefs}
\usepackage{tikz-cd}

\newtheorem{theorem}{Theorem}[section]
\newtheorem{lemma}[theorem]{Lemma}
\newtheorem{proposition}[theorem]{Proposition}
\newtheorem{corollary}[theorem]{Corollary}

\theoremstyle{definition}
\newtheorem{definition}[theorem]{Definition}
\newtheorem{example}[theorem]{Example}

\newtheorem{conjecture}[theorem]{Conjecture}

\theoremstyle{remark}
\newtheorem{remark}[theorem]{Remark}

\numberwithin{equation}{section}

\newcommand{\abs}[1]{\lvert#1\rvert}

\newcommand*{\triple}[2][.1ex]{%
  \mathrel{\vcenter{\offinterlineskip%
  \hbox{$#2$}\vskip#1\hbox{$#2$}\vskip#1\hbox{$#2$}}}}
\newcommand*{\triplerightarrow}{\triple{\rightarrow}}


\begin{document}

\title{Ulrich Trichotomy on Del Pezzo Surfaces}

\author{Emre Coskun}
\address{Department of Mathematics, Middle East Technical University, Ankara, TURKEY}
\email{emcoskun@metu.edu.tr}
\thanks{The first author was supported by TUBITAK Project 119F024.}

\author{Ozhan Genc}
\address{Department of Mathematics and Informatics, Jagiellonian University,
{\L}ojasiewicza 6, 30-348, Krak{\'o}w, Poland}
\email{ozhangenc@gmail.com}
\thanks{The second author is supported by Narodowe Centrum Nauki 2018/30/E/ST1/00530.}

\subjclass[2000]{14J60, 16G20, 16G60}

\date{\today}


\keywords{Algebraic geometry, vector bundles, Ulrich bundles, quiver representations}

\begin{abstract}
In this article, we use a correspondence between Ulrich bundles on a projective variety and quiver representations to prove that certain del Pezzo surfaces satisfy the Ulrich trichotomy, for any given polarization.
\end{abstract}

\maketitle
\section{Introduction}
An \emph{Ulrich bundle} $\mathcal{E}$ on a polarized variety $(X,H)$ is an ACM (\emph{arithmetically Cohen-Macaulay}) vector bundle whose associated module $\bigoplus_{t} \mbox{H}^0 (X,\mathcal{E}(tH))$ has the maximum possible number of generators, which is $\mbox{deg}(X)\mbox{rank}(\mathcal{E})$. Ulrich bundles on projective varieties have been the subject of increased attention in the last decade. A vector bundle One of the most important questions is the question of existence of Ulrich bundles. This question was posed in \cite{ESW2003}. Since then, the existence problem was solved for many varieties; conversely, no variety that does not have an Ulrich bundle on it is known. Another way to consider the existence problem of Ulrich bundles on varieties is through the \emph{Ulrich representation type} of varieties. The \emph{Ulrich representation type} of a variety is a way to describe the behavior of families of Ulrich bundles. (See Definition \ref{definition:ulrich_representation_type}.)

An analogous problem is the representation type of quivers. Quivers can be divided into \emph{finite type}, \emph{tame}, and \emph{wild} according to the behavior of their representations. (See \cite{Kirillov2016}. We note that he considers finite type quivers as a special case of tame quivers.) The classification of finite type quivers is completely given by the well-known theorem of Gabriel. (\cite{Kirillov2016}*{Theorem 3.3}) It is also possible to give a complete description of tame and wild quivers. (\cite{Kirillov2016}*{Theorem 7.47})

It is therefore an interesting question to ask whether varieties can similarly be classified as varieties of \emph{finite Ulrich representation type}, \emph{tame Ulrich representation type}, and \emph{wild Ulrich representation type}, and whether this trichotomy is exhaustive. (See Definition \ref{definition:ulrich_trichotomy}.) As a matter of fact, the same problem has been already stated for ACM bundles. (\cite{Miro-Roig2018}) However, in the literature, almost all wild ACM type examples have been obtained by showing the existence of Ulrich bundles. So it is natural to ask the same question for the Ulrich representation type.

It must be emphasized that the definition of an Ulrich bundle on a variety $X$ depends on the polarization, i.e. the embedding of $X$ in projective space. It is therefore not a surprise that the Ulrich representation type may depend on the polarization. An example is given by the projective plane $\mathbb{P}^2$. For polarizations given by $H$ and $2H$, where $H$ is the hyperplane class, there is a unique irreducible Ulrich bundle on $\mathbb{P}^2$. (\cite{CG2017}*{Proposition 3.1 and Theorem 5.4}) Hence, $\mathbb{P}^2$ is of finite Ulrich representation type for these two polarizations. For all other polarizations, it is of wild Ulrich representation type. (\cite{CG2017}*{Theorem 6.1 and Theorem 6.2}, \cite{CMR2018}*{Theorem 3}) Even though it is known that del Pezzo surfaces with the regular embedding are Ulrich wild and hence the Ulrich trichotomy is satisfied (\cite{Miro-Roig2018}*{Theorem 3.3.31}), the Ulrich trichotomy problem is open for del Pezzo surfaces with arbitrary polarization.

In this article, we use a theorem of Bondal (\cite{Bondal1989}*{Theorem 6.2}, \cite{Maiorana2017}*{Theorem 2.30}) relating elements of the derived category $\mbox{D}(X)$ of a variety $X$ to representations of a quiver to prove that \emph{Ulrich trichotomy} holds for certain del Pezzo surfaces for all possible polarizations.

The approach we follow in this article supposes the existence of some exceptional collections on the del Pezzo surfaces that we consider. First, it always is the case that there is a line bundle on an extremity of these collections. This implies that, with a suitable shift, we can always suppose this line bundle to be that given by the polarization itself. This has the consequence that the quiver representation given by Bondal's theorem (Theorem \ref{theorem:bondal}) has a zero-dimensional vector space in a vertex at an extremity. Second, the remaining quiver must be hyperbolic; this then allows us to use some results in Kac's article (\cite{Kac1980}). These restrictions allows us to prove that Ulrich trichotomy holds, with any polarization, for $\mathbb{P}^2$, $\mathbb{P}^1 \times \mathbb{P}^1$, the sextic del Pezzo surface $X_3$, and the quintic del Pezzo surface $X_4$. (The subscripts indicate the number of points that are blown up in $\mathbb{P}^2$.) The result for $\mathbb{P}^2$ follows from the previous articles \cite{CG2017} and \cite{CMR2018}, but we include a new proof here for illustrating the method. There are also partial results for $\mathbb{P}^1 \times \mathbb{P}^1$. (\cite{Antonelli2018}) Finally, by \cite{Miro-Roig2018}, it is known that del Pezzo surfaces with the classical embedding are of wild representation type. The other results are completely new. To our knowledge, 
the relation that Bondal's theorem gives between Ulrich bundles and quiver representations has been used in only one article (\cite{LP2017}) where it was shown that the moduli space of Ulrich bundles on the Fano 3-fold $V_5$ is isomorphic to an open subset of the moduli space of stable representations of a certain quiver. We anticipate that this method will be used for other questions about Ulrich bundles such as the study of their moduli spaces.

Throughout the article, we work over the field $\mathbb{C}$ of complex numbers. Varieties are smooth and projective. We suppress the variety from the notation for sheaves whenever the underlying variety is clear from the context.

\section{Preliminaries}

\subsection{Fano Varieties}
\begin{definition}
$X$ is called a \emph{Fano variety}, or simply \emph{Fano}, if its anticanonical bundle is ample.
\end{definition}

\begin{example}
The only one-dimensional Fano variety is $\mathbb{P}^1$.
\end{example}

\begin{definition}
A two-dimensional Fano variety $X$ is called a \emph{del Pezzo surface}.
\end{definition}
\begin{remark}
Del Pezzo surfaces are classified as follows: $\mathbb{P}^2$; smooth quadrics $Q \subset \mathbb{P}^3$; a series of surfaces $X_d$ for $d=1, \ldots, 8$ that can be obtained by blowing up $d$ points on $\mathbb{P}^2$ in general position. We refer the reader to \cite{IP1999}*{Examples 2.1.5 (i)} and the references therein for more information.
\end{remark}

The Picard group of $X_d$ is a free abelian group of rank $d+1$. A set of free generators is given by $\ell$, the pullback class of a line in $\mathbb{P}^2$; and the classes $\ell_i$ of the exceptional divisors for $i=1, \ldots, d$. The canonical class is $K_{X_d} = -3 \ell + \sum_{i=1}^d \ell_i$.

For convenience, we state the Riemann-Roch theorem for line bundles on $X_d$.
\begin{theorem}\label{theorem:riemann-roch}
Let $\mathcal{O}(D)$ be a line bundle on $X_d$, where $D = a \ell + \sum_{i=1}^d b_i \ell_i$. Then,
\[
 \chi(\mathcal{O}(D)) = \frac{(a+1)(a+2)}{2} + \sum_{i=1}^d \frac{b_i (1 - b_i)}{2}.
\]
\end{theorem}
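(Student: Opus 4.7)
The plan is to apply the Hirzebruch--Riemann--Roch formula for a line bundle on a smooth projective surface,
\[
\chi(\mathcal{O}(D)) = \chi(\mathcal{O}_{X_d}) + \tfrac{1}{2}D\cdot(D-K_{X_d}),
\]
and reduce everything to the explicit intersection pairing on $X_d$ coming from the generators $\ell,\ell_1,\dots,\ell_d$ of $\mathrm{Pic}(X_d)$.

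First I would record the data. Since $X_d$ is rational we have $\chi(\mathcal{O}_{X_d})=1$ (from $h^1(\mathcal{O})=h^2(\mathcal{O})=0$ for a blow-up of $\mathbb{P}^2$). The intersection numbers are standard: $\ell^2=1$, $\ell_i^2=-1$, $\ell\cdot\ell_i=0$, and $\ell_i\cdot\ell_j=0$ for $i\neq j$. The canonical class, as stated in the excerpt, is $K_{X_d}=-3\ell+\sum_{i=1}^d\ell_i$.

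Next I would compute the two intersection numbers that appear in the formula. Writing $D=a\ell+\sum b_i\ell_i$ and using bilinearity,
\[
D^2 = a^2 - \sum_{i=1}^d b_i^2, \qquad D\cdot K_{X_d} = -3a - \sum_{i=1}^d b_i,
\]
so
\[
D\cdot(D-K_{X_d}) = a^2 + 3a + \sum_{i=1}^d (b_i - b_i^2).
\]

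Finally I would substitute and rearrange. Plugging into Riemann--Roch,
\[
\chi(\mathcal{O}(D)) = 1 + \tfrac{1}{2}\bigl(a^2+3a\bigr) + \sum_{i=1}^d \tfrac{b_i(1-b_i)}{2} = \tfrac{(a+1)(a+2)}{2} + \sum_{i=1}^d \tfrac{b_i(1-b_i)}{2},
\]
using $1+\tfrac{1}{2}(a^2+3a)=\tfrac{1}{2}(a^2+3a+2)=\tfrac{(a+1)(a+2)}{2}$. There is no real obstacle here: the only inputs are the surface Riemann--Roch formula, the intersection form on a blow-up of $\mathbb{P}^2$, and the vanishing of the irregularity and geometric genus, all of which are standard for rational surfaces. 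The proof is essentially a one-line substitution once these ingredients are in place.
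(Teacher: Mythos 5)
Your proof is correct, and since the paper simply states this Riemann--Roch formula without proof (it is standard), your derivation via Hirzebruch--Riemann--Roch, $\chi(\mathcal{O}_{X_d})=1$, the intersection form $\ell^2=1$, $\ell_i^2=-1$, $\ell\cdot\ell_i=\ell_i\cdot\ell_j=0$, and $K_{X_d}=-3\ell+\sum_i\ell_i$ is exactly the intended argument. All the intersection computations and the final simplification $1+\tfrac{1}{2}(a^2+3a)=\tfrac{(a+1)(a+2)}{2}$ check out.
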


Since we shall investigate the trichotomy problem for del Pezzo surfaces with arbitrary polarizations, we also need criteria for a line bundle to be ample. As we only need the \emph{del Pezzo sextic} $X_3$ and the \emph{del Pezzo quintic} $X_4$, the following theorem is stated for those cases.
\begin{theorem}\label{theorem:ampleness}
Let $D = a \ell + b \ell_1 + c \ell_2 + d \ell_3$ be a divisor on $X_3$. Then $\mathcal{O}(D)$ is ample if and only if it is very ample, if and only if $b,c,d \leq -1$, $a+b+c \geq 1$, $a+b+d \geq 1$, $a+c+d \geq 1$.

Let $D = a \ell + b \ell_1 + c \ell_2 + d \ell_3 + e \ell_4$ be a divisor on $X_4$. Then $\mathcal{O}(D)$ is ample if and only if it is very ample, if and only if $b,c,d,e \leq -1$, $a+b+c \geq 1$, $a+b+d \geq 1$, $a+b+e \geq 1$, $a+c+d \geq 1$, $a+c+e \geq 1$, $a+d+e \geq 1$.
\end{theorem}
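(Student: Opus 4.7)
The plan is to verify the equivalences in sequence. ``Very ample implies ample'' is tautological, so the real work splits into a numerical criterion for ampleness, and the classical fact that on low-degree del Pezzo surfaces ample line bundles are already very ample.

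For the numerical criterion I would apply Kleiman's theorem: on a smooth projective surface, $D$ is ample if and only if $D \cdot C > 0$ for every nonzero class $C$ of the Mori cone $\overline{\mbox{NE}}(X)$. On a del Pezzo surface $X_d$ with $d \leq 8$ this cone is rational polyhedral and generated by the finitely many $(-1)$-curves, so ampleness is equivalent to strict positivity against each of them. On $X_3$ the six $(-1)$-curves are the three exceptional divisors $\ell_1, \ell_2, \ell_3$ together with the three strict transforms $\ell - \ell_i - \ell_j$ of the lines through pairs of blown-up points; on $X_4$ one gets the four $\ell_i$'s and the six $\ell - \ell_i - \ell_j$'s, for a total of ten.

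A direct intersection computation using $\ell^2 = 1$, $\ell_i^2 = -1$, $\ell \cdot \ell_i = 0$, and $\ell_i \cdot \ell_j = 0$ for $i \neq j$ gives $D \cdot \ell_i = -b_i$ and $D \cdot (\ell - \ell_i - \ell_j) = a + b_i + b_j$ (writing $b_1 = b$, etc.). Since the coefficients of $D$ are integers, strict positivity of these intersection numbers is equivalent to $-b_i \geq 1$ and $a + b_i + b_j \geq 1$, which reproduces exactly the list of inequalities stated in the theorem for each of $X_3$ and $X_4$.

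The main obstacle is the ``ample implies very ample'' direction, which is not a formal numerical consequence. I would appeal to the classical result (see Demazure's treatment of del Pezzo surfaces, or Dolgachev's \emph{Classical Algebraic Geometry}) that on $X_d$ with $d \leq 7$ every ample line bundle is very ample; alternatively, writing $D = K_{X_d} + L$ with $L := D - K_{X_d}$, one can check that $L$ is nef and that $L^2$ is large enough under the stated inequalities that Reider's theorem excludes the exceptional configurations and yields very ampleness of $D$. The numerical side is, by contrast, a routine bookkeeping exercise once the Mori cone is described.
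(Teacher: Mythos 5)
Your argument is correct in substance but follows a genuinely different route from the paper, which simply quotes Di Rocco's results on $k$-very ample line bundles on del Pezzo surfaces (her Corollary 4.7 for the equivalence of ample and very ample, and Corollary 4.6 for the numerical characterization); those corollaries package both halves of your argument into a single citation. What you do instead is re-derive the numerical criterion from Kleiman's criterion together with the fact that for $X_3$ and $X_4$ the Mori cone is rational polyhedral and spanned by the $(-1)$-curves ($\ell_1,\ell_2,\ell_3$ and the three classes $\ell-\ell_i-\ell_j$ on $X_3$; the four $\ell_i$ and six $\ell-\ell_i-\ell_j$ on $X_4$), and your intersection computations $D\cdot\ell_i=-b_i$, $D\cdot(\ell-\ell_i-\ell_j)=a+b_i+b_j$ do reproduce exactly the stated inequalities, with integrality converting strict positivity into the $\geq 1$ form. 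This is more self-contained and more transparent than the citation, at the cost of having to supply the ``ample $\Rightarrow$ very ample'' step separately. Two caveats there: the classical statement holds for del Pezzo surfaces of degree at least $3$, i.e.\ $X_d$ with $d\leq 6$, not $d\leq 7$ as you wrote (on the degree-$2$ surface $X_7$ the anticanonical class is ample but only defines a double cover of $\mathbb{P}^2$), though this does not affect the cases $X_3$, $X_4$ you need; and if you prefer the Reider route you should actually verify the hypotheses, which do go through: $L=D-K_X$ is a sum of two ample divisors, one checks $L^2\geq 10$ from $K_{X_3}^2=6$, $K_{X_4}^2=5$ and the stated inequalities, and the exceptional configurations are excluded because $L\cdot E\geq 2$ for every curve $E$ and $L\cdot E=2$ with $E^2=0$ contradicts adjunction. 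Either completion is fine; the paper's proof avoids all of this bookkeeping by invoking Di Rocco, whose criterion also covers the global generation statements used later in the article.
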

\begin{proof}
The first equivalences follow from \cite{DiRocco1996}*{Corollary 4.7}. The second equivalences follow from \cite{DiRocco1996}*{Corollary 4.6}.
\end{proof}

\subsection{Ulrich Bundles}
\begin{definition}\label{definition:ulrich}
A vector bundle $\mathcal{E}$ on a polarized variety $(X, \mathcal{O}(H))$ is called an \emph{Ulrich bundle} if $\mbox{h}^{*}(\mathcal{E}(-iH))=0$ for $1 \leq i \leq \mbox{dim}(X)$.
\end{definition}
\begin{remark}\label{remark:ulrich}
There are other, equivalent, definitions of Ulrich bundles; see \cite{Beauville2018}. Note also that if a direct sum of vector bundles is Ulrich, then every summand must be Ulrich. This implies that, whenever we take an Ulrich bundle on a variety, we can take it to be \emph{indecomposable} without loss of generality.
\end{remark}

\begin{lemma}\label{lemma:ulrich_duality}
If $\mathcal{E}$ is an Ulrich bundle on a polarized variety $(X, \mathcal{O}(H))$ of dimension $m$, then $\mathcal{E}^{\vee}((m+1)H+K_X)$ is also an Ulrich bundle on $X$.
\end{lemma}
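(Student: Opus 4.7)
The plan is to verify the defining vanishing $h^\ast(\mathcal{F}(-iH))=0$ for $1\le i\le m$ directly, where $\mathcal{F}:=\mathcal{E}^\vee((m+1)H+K_X)$, by reducing it to the corresponding vanishing for $\mathcal{E}$ via Serre duality.

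First I would rewrite the twist as
\[
\mathcal{F}(-iH) \;=\; \mathcal{E}^\vee\bigl((m+1-i)H + K_X\bigr).
\]
Then, for any integer $j$ with $0\le j\le m$, Serre duality on the smooth projective variety $X$ yields
\[
H^j\bigl(X, \mathcal{E}^\vee((m+1-i)H+K_X)\bigr) \;\cong\; H^{m-j}\bigl(X, \mathcal{E}(-(m+1-i)H)\bigr)^{\vee}.
\]

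Next I would observe the combinatorial point that drives everything: if $i$ runs over $\{1,2,\dots,m\}$, then $i':=m+1-i$ also runs over $\{1,2,\dots,m\}$. Hence the Ulrich hypothesis on $\mathcal{E}$, namely $h^\ast(\mathcal{E}(-i'H))=0$ for $1\le i'\le m$, forces every cohomology group on the right-hand side above to vanish. Reading the isomorphism backwards gives $h^\ast(\mathcal{F}(-iH))=0$ for all $1\le i\le m$, which by Definition \ref{definition:ulrich} means that $\mathcal{F}$ is Ulrich.

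I do not expect any real obstacle: the statement is a formal consequence of Serre duality once one notes the symmetry $i\mapsto m+1-i$ of the Ulrich range. The only things to double-check are that $\mathcal{E}^\vee$ is indeed a vector bundle (which holds because $\mathcal{E}$ is locally free) and that the shift by $(m+1)H+K_X$ is exactly what produces the canonical twist $K_X$ needed to apply Serre duality after absorbing the $-iH$ into $(m+1-i)H$. Both are immediate.
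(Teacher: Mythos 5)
Your argument is correct and is precisely the Serre-duality computation that the paper leaves implicit when it calls the lemma ``an easy consequence of Definition \ref{definition:ulrich}'': the twist $(m+1)H+K_X$ converts $h^j(\mathcal{F}(-iH))$ into $h^{m-j}(\mathcal{E}(-(m+1-i)H))$, and the symmetry $i\mapsto m+1-i$ of the range $\{1,\dots,m\}$ finishes the proof. No gaps; this matches the intended proof.
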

\begin{proof}
This is an easy consequence of Definition \ref{definition:ulrich}.
\end{proof}

The following definition is adapted from \cite{Miro-Roig2018}*{Definition 3.2.10}; it was stated there for \emph{arithmetically Cohen-Macaulay (ACM)} vector bundles.
\begin{definition}\label{definition:ulrich_representation_type}
Let $(X, \mathcal{O}(H))$ be a projective variety with polarization given by a very ample line bundle.
\begin{enumerate}
 \item $X$ is of \emph{finite Ulrich representation type} if there are only a finite number of indecomposable Ulrich bundles on $X$ up to isomorphism.
 \item $X$ is of \emph{tame Ulrich representation type} if either it has an infinite discrete set of indecomposable Ulrich bundles on $X$ up to isomorphism, or the families of non-isomorphic indecomposable Ulrich bundles of any given rank form a finite number of families of dimension at most $\mbox{dim}(X)$.
 \item $X$ is of \emph{wild Ulrich representation type} if there exist families of non-iso\-mor\-phic indecomposable Ulrich bundles on $X$ with arbitrarily large dimension.
\end{enumerate}
\end{definition}

It was asked in \cite{Miro-Roig2018}*{Problem 3.2.11} whether these three representation types exhausted all the possibilities for ACM vector bundles. In analogy with that problem, we make the following definition.
\begin{definition}\label{definition:ulrich_trichotomy}
Let $(X, \mathcal{O}(H))$ be a projective variety with polarization given by a very ample line bundle. We say that \emph{Ulrich trichotomy holds} for $X$ with the given polarization if $X$ is one of \emph{finite}, \emph{tame}, or \emph{wild Ulrich representation types}.
\end{definition}

\begin{conjecture}\label{conjecture:trichotomy}
Ulrich trichotomy holds for every projective variety $(X, \mathcal{O}(H))$.
\end{conjecture}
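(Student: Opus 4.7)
The plan is to attack this via the derived-categorical/quiver approach developed in the body of the paper, pushed as far as possible beyond the surface case. The guiding principle is Drozd's tame--wild dichotomy for finite-dimensional algebras over an algebraically closed field: any such algebra is either tame (with finite type as a special case) or wild, and never both. If one can, for a given $(X,\mathcal{O}(H))$, produce a tilting object $T\in\mathrm{D}^b(X)$ whose endomorphism algebra $A=\mathrm{End}(T)$ is finite-dimensional, then a Bondal-type equivalence $\mathrm{D}^b(X)\simeq\mathrm{D}^b(\mathrm{mod}\,A)$ becomes available, and one can try to transport the representation-type trichotomy for $A$ to $X$.

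Concretely, the steps would be as follows. First, reformulate the Ulrich condition of Definition \ref{definition:ulrich} as the simultaneous vanishing of cohomology against the twists $\mathcal{O}(-iH)$ for $1\le i\le\dim(X)$, cutting out a full exact subcategory $\mathrm{Ulr}(X,H)\subset\mathrm{Coh}(X)$. Second, in the presence of a full strong exceptional collection whose tail is $\mathcal{O}(-\dim(X)\cdot H),\dots,\mathcal{O}(-H),\mathcal{O}$, identify $\mathrm{Ulr}(X,H)$ with the subcategory of $\mathrm{mod}\,A$ carved out by vanishing of certain components of the dimension vector, exactly in the spirit of the surface arguments used later in the paper. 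Third, invoke Drozd's theorem for $A$ and verify that the induced classification of $A$-modules descends to $\mathrm{Ulr}(X,H)$, matching the numerical clauses of Definition \ref{definition:ulrich_representation_type}: finitely many indecomposables, families of dimension at most $\dim(X)$, or families of arbitrarily large dimension.

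The hard part---and the reason this is labelled a conjecture rather than a theorem---is carrying out this program \emph{unconditionally} for every projective variety. Two fundamental obstructions stand in the way. First, tilting objects with finite-dimensional endomorphism algebra are not known to exist on a general $(X,\mathcal{O}(H))$; their existence forces $\mathrm{K}_0(X)$ to be free of finite rank, which already fails for curves of positive genus and for most varieties of Kodaira dimension $\ge 0$. Handling such $X$ would require replacing the quiver machine by a genuinely noncommutative analogue (dg algebras, noncommutative crepant resolutions, or Orlov-type components of semiorthogonal decompositions) together with an extension of Drozd's dichotomy to those settings, which is not presently available in the generality needed. Second, even when a tilting description exists, the Ulrich condition is an open condition defined by cohomological vanishings, and a representation-type trichotomy for the ambient algebra descends to such a cohomologically defined subcategory only after serious moduli-theoretic input (boundedness of Ulrich families, control of stability, and Kac-type dimension counts for the indecomposable loci of each dimension vector).

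A realistic intermediate target, before the full conjecture, is to establish Ulrich trichotomy for all $(X,\mathcal{O}(H))$ that admit a full strong exceptional collection with tail as above; the surface results of this paper are then the low-dimensional instances, and the natural next cases are threefolds such as $\mathbb{P}^3$, smooth quadrics, and Fano threefolds of low index, where the tilting and stability theory are sufficiently developed for the Drozd-plus-Kac strategy to have a chance of succeeding.
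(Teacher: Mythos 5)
The statement you were asked to prove is labelled a conjecture, and the paper itself does not prove it: it only establishes the special cases $\mathbb{P}^2$, $\mathbb{P}^1\times\mathbb{P}^1$, $X_3$, and $X_4$ (Theorems \ref{theorem:ulrich_trichotomy_for_p2}, \ref{theorem:ulrich_trichotomy_for_p1_times_p1}, \ref{theorem:ulrich_trichotomy_for_x3}, \ref{theorem:ulrich_trichotomy_for_x4}). Your submission is likewise a program rather than a proof, and you are candid about that; so the useful comparison is between your proposed strategy and the paper's actual method in the cases where it succeeds.

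The substantive gap in your program, beyond the obstructions you already name, is the reliance on Drozd's tame--wild dichotomy for the algebra $A=\mathrm{End}(T)$. Drozd's theorem governs the representation type of all of $\mathrm{mod}\,A$, but the Ulrich bundles correspond only to representations with a prescribed zero component at one vertex and satisfying further numerical constraints, and the representation type of such a cohomologically cut-out subfamily is not determined by that of $\mathrm{mod}\,A$: the algebra can be wild while every dimension vector realized by an Ulrich bundle is a real root carrying a unique indecomposable. This is exactly the situation the paper must control, and it does so not with Drozd but with Kac's root-theoretic analysis applied after deleting the vertex killed by the Ulrich vanishing: the residual quiver ($K_3$, $S_4$, $K_{3,2}$, $K_{5,1}$) is hyperbolic, each Ulrich dimension vector $\underline{d}$ is sorted by the sign of $\chi(\underline{d},\underline{d})$, and the finite/tame conclusions in the nonnegative cases rest on showing that the system formed by the rank equation, the constraint $\chi(\mathcal{E}(-2H))=0$, and $\chi(\underline{d},\underline{d})=1$ (or $0$) has only finitely many integer solutions --- proved via an accumulation-point argument in $\mathbb{R}\mathbb{P}^n$ that reduces to the incidence of a quadric cone with a hyperplane, where ampleness of $H$ supplies the decisive inequality $a^2-\sum_i b_i^2>0$. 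Your program defers precisely this step to unspecified ``Kac-type dimension counts,'' so even your intermediate target (varieties carrying a full strong exceptional collection of the stated shape) is not established by the proposal, let alone the general conjecture.
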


In this article, we prove that Conjecture \ref{conjecture:trichotomy} holds for certain varieties.
\begin{theorem}
Ulrich trichotomy holds for $\mathbb{P}^2$, $\mathbb{P}^1 \times \mathbb{P}^1$, the del Pezzo sextic $X_3$, and the del Pezzo quintic $X_4$, with any given polarization.
\end{theorem}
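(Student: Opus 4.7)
The plan is to pass from Ulrich bundles to quiver representations via Bondal's theorem and then read off the representation type from the underlying quiver using the theory of Kac. The starting point is the observation that each of the four surfaces admits a full strong exceptional collection $E_0,\ldots,E_n$ of line bundles; an Ulrich bundle $\mathcal{E}$, viewed as an object in the derived category $\mathrm{D}(X)$, corresponds under Bondal's equivalence to a representation of the Ext-quiver $Q$ of this collection, with dimension vector $(d_0,\ldots,d_n)$ where $d_i = \dim\mathrm{Ext}^\bullet(E_i,\mathcal{E})$.

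The first step, carried out polarization by polarization, is to shift the given exceptional collection by twisting with an appropriate line bundle so that the extremal bundle $E_0$ agrees with $\mathcal{O}(-H)$, where $H$ is the polarization in question. The Ulrich vanishing condition $\mathrm{h}^\ast(\mathcal{E}(-H))=0$ then translates precisely to $d_0=0$. Thus the corresponding representations of $Q$ live on the full subquiver $Q'$ obtained by deleting the vertex $0$, and their dimension vectors are further constrained by Ulrich cohomology computations (using Riemann--Roch, Theorem~\ref{theorem:riemann-roch}, together with the ampleness criteria of Theorem~\ref{theorem:ampleness} to pin down which Ext-groups vanish for degree reasons).

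The second step is to invoke Kac's classification: over $\mathbb{C}$, the representations of a quiver $Q'$ with fixed dimension vector $\alpha$ form an irreducible moduli of dimension $1-\langle\alpha,\alpha\rangle$ when $\alpha$ is a positive root, and this dimension, together with whether $\alpha$ is real or imaginary, controls whether there is one indecomposable, a one-parameter family, or arbitrarily large families. For each of $\mathbb{P}^2$, $\mathbb{P}^1\times\mathbb{P}^1$, $X_3$, $X_4$ I would identify the possible subquivers $Q'$ arising after the shift: they are either Dynkin (forcing finite type), extended Dynkin (tame), or hyperbolic/wild. The representation type of Ulrich bundles can then be read off directly, case by case, from the Tits form of $Q'$ evaluated on the allowed dimension vectors.

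The main obstacle will be the combinatorics in the last two cases. For $X_3$ and $X_4$ the polarization $H = a\ell + \sum b_i\ell_i$ has three or four free parameters subject to the ampleness inequalities, and for each chamber of parameters the shift needed to make $E_0 = \mathcal{O}(-H)$ may force different subquivers $Q'$ and different constraints on $d_1,\ldots,d_n$. Sorting these chambers so that the resulting Tits form is always sign-definite in a predictable way, and then checking that wild cases really do admit arbitrarily large families of indecomposable Ulrich bundles (as opposed to merely large-dimensional moduli of quiver representations not coming from bundles), is the delicate part; the easy extremes $H = -K_X$ and small twists of it serve as sanity checks where the classical results of \cite{Miro-Roig2018} recover what we expect.
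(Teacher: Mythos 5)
Your first step is essentially the paper's: twist the exceptional collection so that the extremal object becomes $\mathcal{O}(H)$ (equivalently, arrange $R\mathrm{Hom}$ against the polarization), so that the Ulrich vanishing kills one vertex and the bundle becomes a representation of the remaining quiver. But the heart of your second step is a wrong turn. The surviving quiver $Q'$ does not depend on the polarization and is never Dynkin or extended Dynkin: it is $K_3$ for $\mathbb{P}^2$, $S_4$ for $\mathbb{P}^1\times\mathbb{P}^1$, $K_{3,2}$ for $X_3$ and $K_{5,1}$ for $X_4$, all hyperbolic of indefinite (wild) type, for every choice of $H$. So "sorting polarization chambers so that the Tits form of $Q'$ is sign-definite" and reading the Ulrich representation type off the type of $Q'$ cannot work; taken literally it would declare every polarization Ulrich-wild, contradicting the finite type of $(\mathbb{P}^2,\mathcal{O}(1))$ and $(\mathbb{P}^2,\mathcal{O}(2))$ (\cite{CG2017}). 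The trichotomy in the paper is not the trichotomy of the quiver: it is a case analysis on the sign of the Tits form $\chi(\underline{d},\underline{d})$ evaluated only on those dimension vectors $\underline{d}$ that are actually realized by Ulrich bundles, which form a thin subset of all dimension vectors. (A side inaccuracy: the collections used are not all made of line bundles; the one for $X_4$ contains the rank-two bundle $\mathcal{F}$, and the dual collection for $\mathbb{P}^2$ contains $\Omega(d+1)$.)

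Three ingredients needed to close the argument are absent. First, you must prove that at each surviving vertex the complex $R\mathrm{Hom}({}^{\vee}E_i,\mathcal{E})$ is concentrated in a single cohomological degree, so that $\Phi(\mathcal{E})$ really is a representation; for $X_3$ and $X_4$ this uses Ulrich duality (Lemma \ref{lemma:ulrich_duality}) and effectivity of $H-D$ via the criteria of Theorem \ref{theorem:ampleness}, not just degree counts. Second, in the case $\chi(\underline{d},\underline{d})<0$ you must produce arbitrarily large families of genuine indecomposable Ulrich bundles, not merely of representations; the paper does this by noting $\underline{d}$ is an imaginary Schur root (\cite{Kac1980}), that $n\underline{d}$ is again a Schur root (\cite{Schofield1992}), and that Ulrich-ness is open and is satisfied somewhere in $\mathrm{Rep}(n\underline{d})$ because $\mathcal{E}^{\oplus n}$ is Ulrich. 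You flag this issue but give no mechanism. Third, and most substantively, in the cases $\chi(\underline{d},\underline{d})=0$ or $1$ you must show that for each fixed rank only finitely many dimension vectors occur; otherwise tame/finite type cannot be concluded. This is where ampleness actually enters: the paper combines the linear rank equation, the linear equation coming from $\chi(\mathcal{E}(-2H))=0$ via Riemann--Roch, and the quadratic Tits equation, homogenizes, extracts an accumulation point in real projective space, and shows the homogeneous system has only the zero solution by a cone--hyperplane angle estimate that reduces exactly to $a^2-\sum b_i^2>0$ for $H=a\ell+\sum b_i\ell_i$. Nothing in your outline plays this role, so the finite and tame cases are not established.
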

\begin{proof}
See Theorems \ref{theorem:ulrich_trichotomy_for_p2}, \ref{theorem:ulrich_trichotomy_for_p1_times_p1}, \ref{theorem:ulrich_trichotomy_for_x3}, and \ref{theorem:ulrich_trichotomy_for_x4}.
\end{proof}

We finally mention the semistability property of Ulrich bundles, which implies that a (coarse) moduli space of Ulrich bundles exists.
\begin{theorem}
Every Ulrich bundle on $(X, \mathcal{O}(H))$ is Gieseker-semistable.
\end{theorem}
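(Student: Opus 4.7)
The approach is to leverage the rigidity of the Hilbert polynomial imposed by the Ulrich vanishings. By Definition \ref{definition:ulrich}, $\chi(\mathcal{E}(-iH))=0$ for $1 \leq i \leq n := \mbox{dim}(X)$, so the Hilbert polynomial $P_\mathcal{E}(t) := \chi(\mathcal{E}(tH))$ vanishes at the $n$ integers $t = -1, \ldots, -n$. Since $P_\mathcal{E}$ has degree $n$ with leading coefficient $\mbox{rank}(\mathcal{E}) \cdot H^n / n! = r \cdot \mbox{deg}(X) / n!$ by Hirzebruch--Riemann--Roch, this forces
\[
P_\mathcal{E}(t) = r \cdot \mbox{deg}(X) \binom{t+n}{n} =: r \cdot P_0(t),
\]
so the reduced Hilbert polynomial $p_\mathcal{E} = P_0$ is independent of $\mathcal{E}$ and depends only on $(X, \mathcal{O}(H))$. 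Consequently, Gieseker-semistability of $\mathcal{E}$ reduces to showing that every proper saturated subsheaf $\mathcal{F} \subset \mathcal{E}$ of rank $r'$, $0 < r' < r$, satisfies $P_\mathcal{F}(t) \leq r' P_0(t)$ for $t \gg 0$.

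I would establish this by induction on $n$. The key vanishing for any such $\mathcal{F}$ is $h^0(\mathcal{F}(-iH)) = 0$ for $1 \leq i \leq n$, which follows immediately from the injection $\mathcal{F}(-iH) \hookrightarrow \mathcal{E}(-iH)$ together with the Ulrich condition on $\mathcal{E}$. In the base case $n=1$, $\mathcal{E}$ has specific slope $\mu(\mathcal{E}) = \mbox{deg}(H|_C) + g_C - 1$ dictated by $\chi(\mathcal{E}(-H)) = 0$ and Riemann--Roch on the curve $C$; for any subsheaf $\mathcal{F}$, the vanishing $h^0(\mathcal{F}(-H)) = 0$ yields $\chi(\mathcal{F}(-H)) = -h^1(\mathcal{F}(-H)) \leq 0$, hence $\mu(\mathcal{F}) \leq \mu(\mathcal{E})$, which on a curve is equivalent to Gieseker-semistability of $\mathcal{E}$.

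For the inductive step I restrict to a general hyperplane section $Y \in |H|$: the short exact sequence $0 \to \mathcal{E}(-H) \to \mathcal{E} \to \mathcal{E}|_Y \to 0$ combined with the Ulrich vanishings for $\mathcal{E}$ transfers them to $\mathcal{E}|_Y$ on $(Y, H|_Y)$, so $\mathcal{E}|_Y$ is Ulrich on $Y$ and semistable by induction. The main obstacle is the converse lifting step: if $\mathcal{F} \subset \mathcal{E}$ destabilised, one wants to restrict $0 \to \mathcal{F} \to \mathcal{E} \to \mathcal{G} \to 0$ to a generic $Y$ and derive a contradiction from a Hilbert polynomial comparison on $Y$. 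For a sufficiently general $Y$, this restriction remains exact with $\mathcal{F}|_Y$ saturated by a Bertini-type argument in characteristic zero, and flat base change recovers $P_{\mathcal{F}|_Y}$ from $P_\mathcal{F}$ and $P_\mathcal{F}(\cdot - H)$, so a strict destabiliser on $X$ produces a strict destabiliser on $Y$. Alternatively, one can bypass the restriction entirely by combining the vanishings $h^0(\mathcal{F}(-iH)) = 0$ with Serre-dual vanishings extracted by applying the same reasoning to the Ulrich dual $\mathcal{E}^{\vee}((n+1)H + K_X)$ of Lemma \ref{lemma:ulrich_duality}, thereby forcing the polynomial inequality $P_\mathcal{F}(t) \leq r' P_0(t)$ directly.
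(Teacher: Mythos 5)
Your computation of the Hilbert polynomial of an Ulrich bundle, the reduction to saturated subsheaves, and the curve case are all fine, but the inductive step has a genuine gap exactly at the point that separates Gieseker- from slope-semistability. Write $\Delta(t)=P_{\mathcal{F}}(t)/r'-P_{\mathcal{E}}(t)/r$ for a destabilizing saturated subsheaf. Restriction to a general $Y\in|H|$ replaces $\Delta(t)$ by $\Delta(t)-\Delta(t-1)$. If $\deg\Delta\geq 1$ this preserves strict positivity, but in the remaining case---$\mathcal{F}$ has the same slope (indeed the same reduced Hilbert polynomial except for the constant term) and only a strictly larger normalized Euler characteristic---one has $\Delta(t)-\Delta(t-1)=0$, so $\mathcal{F}|_Y$ does \emph{not} strictly destabilize $\mathcal{E}|_Y$ and the semistability of $\mathcal{E}|_Y$ gives no contradiction. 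Thus the induction as described proves at best slope-semistability, not the stated Gieseker-semistability. The fallback sketched in your last sentence does not close this gap either: the vanishings $h^0(\mathcal{F}(-iH))=0$, and the Serre-dual vanishings for the quotient $\mathcal{G}=\mathcal{E}/\mathcal{F}$ obtained from the Ulrich dual (equivalently $h^n(\mathcal{G}(-iH))=0$), are automatic consequences of the long exact sequence of $0\to\mathcal{F}(-iH)\to\mathcal{E}(-iH)\to\mathcal{G}(-iH)\to 0$ together with $h^{*}(\mathcal{E}(-iH))=0$; they hold for \emph{every} subsheaf and so cannot force $P_{\mathcal{F}}(t)\leq r'P_0(t)$. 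Concretely, for $n\geq 2$ they only yield $\chi(\mathcal{F}(-iH))=-h^1(\mathcal{F}(-iH))+h^2(\mathcal{F}(-iH))-\cdots$, and the uncontrolled even cohomology (absent on a curve, which is why your base case works) blocks any upper bound on the lower-order coefficients of $P_{\mathcal{F}}$.

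Note also that the paper does not prove this statement internally; it cites \cite{CKM2012}*{Proposition 2.11}, and the known proofs use an ingredient beyond these vanishings. Two standard options to repair your argument: (i) use that a general finite linear projection $\pi:X\to\mathbb{P}^n$ satisfies $\pi_{*}\mathcal{E}\cong\mathcal{O}_{\mathbb{P}^n}^{r\deg X}$; since $\pi$ is finite and $\pi^{*}\mathcal{O}(1)=\mathcal{O}(H)$, pushforward preserves Hilbert polynomials, so a destabilizing $\mathcal{F}\subset\mathcal{E}$ would give $\pi_{*}\mathcal{F}\subset\mathcal{O}_{\mathbb{P}^n}^{r\deg X}$ violating the Gieseker-semistability of the trivial bundle, which yields exactly $P_{\mathcal{F}}(t)\leq r'P_0(t)$; or (ii) argue on the module side, comparing numbers of generators of the maximal Cohen--Macaulay module $\bigoplus_t H^0(\mathcal{E}(tH))$ and of the submodule associated to $\mathcal{F}$, as in the cited reference.
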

\begin{proof}
See \cite{CKM2012}*{Proposition 2.11}.
\end{proof}
\begin{corollary}\label{corollary:ulrich_h2}
For two Ulrich bundles $\mathcal{E}$ and $\mathcal{F}$, we have $\mbox{h}^2(\mathcal{E} \otimes \mathcal{F}^{\vee})=0$.
\end{corollary}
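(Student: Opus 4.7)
The plan is to reduce the vanishing of $h^2$ to the vanishing of a Hom-group via Serre duality, and then to exploit the Gieseker-semistability established in the preceding theorem together with the Fano condition on the underlying del Pezzo surface. Concretely, since $X$ has dimension $m=2$, the identification $\mathcal{E}\otimes\mathcal{F}^{\vee}\cong\mathcal{H}om(\mathcal{F},\mathcal{E})$ combined with Serre duality gives
\[
h^2(\mathcal{E}\otimes\mathcal{F}^{\vee}) \;=\; \dim\mbox{Ext}^2(\mathcal{F},\mathcal{E}) \;=\; \dim\mbox{Hom}(\mathcal{E},\mathcal{F}\otimes\omega_X),
\]
so it suffices to prove that this last Hom-group is zero.

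The key input is that every Ulrich bundle on $(X,\mathcal{O}(H))$ has the same reduced Hilbert polynomial, namely $\deg(X)\binom{t+2}{2}$, and therefore the same $H$-slope. In particular $\mu(\mathcal{E})=\mu(\mathcal{F})$. Tensoring by the line bundle $\omega_X$ shifts the slope by a fixed amount, giving $\mu(\mathcal{F}\otimes\omega_X)=\mu(\mathcal{F})+K_X\cdot H$. Because $X$ is del Pezzo, $-K_X$ is ample, and so $K_X\cdot H<0$ for \emph{any} ample polarization $H$; this yields the strict inequality $\mu(\mathcal{F}\otimes\omega_X)<\mu(\mathcal{E})$.

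Finally I would invoke the preceding theorem: $\mathcal{E}$ is Gieseker-semistable and hence $\mu$-semistable, while $\mathcal{F}\otimes\omega_X$ is $\mu$-semistable as well (semistability is preserved under twisting by line bundles). The standard fact that $\mbox{Hom}$ from a $\mu$-semistable sheaf to a $\mu$-semistable sheaf of strictly smaller slope vanishes then closes the argument. There is no serious obstacle in this approach; the only minor point demanding care is recalling the slope constancy of Ulrich bundles, after which the corollary follows almost immediately from the Fano property of del Pezzo surfaces together with the previous semistability statement.
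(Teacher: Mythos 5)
Your proof is correct and coincides in substance with the paper's: the paper's proof simply defers to a modification of \cite{CG2017}*{Lemma 2.4}, which is exactly this argument — Serre duality identifying $\mbox{h}^2(\mathcal{E}\otimes\mathcal{F}^{\vee})$ with $\dim\mbox{Hom}(\mathcal{E},\mathcal{F}\otimes\omega_X)$, equality of $H$-slopes of Ulrich bundles, semistability, and $K_X\cdot H<0$ on a del Pezzo surface. No gap to report.
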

\begin{proof}
The proof of \cite{CG2017}*{Lemma 2.4} can be suitably modified to yield the conclusion.
\end{proof}
\begin{remark}\label{remark:simple_ulrich_bundle_dimension}
If there exists a \emph{simple} Ulrich bundle $\mathcal{E}$ on $(X, \mathcal{O}(H))$, then $\mathcal{E}$ corresponds to a closed point in a component of the (coarse) moduli space of Ulrich bundles with dimension $\mbox{h}^1(\mathcal{E}^{\vee} \otimes \mathcal{E}) = 1 - \chi(\mathcal{E}^{\vee} \otimes \mathcal{E})$. (We note that $\mbox{h}^2(\mathcal{E}^{\vee} \otimes \mathcal{E})=0$ by Corollary \ref{corollary:ulrich_h2}.)
\end{remark}

\subsection{Quivers and Quiver Representations}
In this subsection, we review the definition and basic properties of quivers. For an introduction to the topic, see \cite{DW2017}.

\begin{definition}
A \emph{quiver} is a directed graph. More formally, a quiver $Q$ is a quadruple $(Q_0, Q_1, s, t)$ where $Q_0$ and $Q_1$ are sets, whose elements are called \emph{vertices} and \emph{arrows} respectively, and two maps $s,t: Q_1 \to Q_0$ that sends each arrow to its \emph{source} and \emph{target} respectively.
\end{definition}
\begin{definition}
A quiver $Q$ is called \emph{finite} if $Q_0$ and $Q_1$ are finite sets. It is called \emph{acyclic} if it does not contain any oriented cycle.
\end{definition}
\begin{remark}
We shall only consider finite and acyclic quivers in this article. With this assumption, the vertices of a quiver $Q$ can be represented as an increasing sequence of integers, where every arrow points in the direction of increasing numbers.
\end{remark}

\begin{definition}
A \emph{representation} $V$ of the quiver $Q$ is a set of finite-dimensional vector spaces $V_i$ for every $i \in Q_0$ and linear maps $V_a: V_i \to V_j$ for every arrow $a: i \to j$. $V$ is called \emph{trivial} if all $V_i = 0$; in this case, we just write $V=0$. Given two representations $V$ and $W$, a \emph{morphism} $\phi: V \to W$ is a collection of linear maps $\phi_i : V_i \to W_i$ such that for every arrow $a: i \to j$, the diagram
\[
 \begin{tikzcd}
  V_i \arrow[r, "V_a"] \arrow["\phi_i", d] & V_j \arrow[d, "\phi_j"] \\
  W_i \arrow[r, "W_a"] & W_j
\end{tikzcd}
\]
commutes.
\end{definition}
\begin{remark}
Since the quiver $Q$ is finite and the $V_i$ were assumed to be finite-dimensional, the dimensions of the $V_i$ can be arranged in a tuple of integers $\underline{d} = \underline{d}(V) \in \mathbb{Z}^{Q_0}$, called the \emph{dimension vector}.
\end{remark}
\begin{remark}\label{remark:rep_q}
The concept of a \emph{direct sum} of quiver representations; \emph{monomorphisms}, \emph{epimorphisms}, \emph{isomorphisms}; and \emph{kernels} and \emph{cokernels} can be defined in a straightforward manner. See \cite{DW2017}*{Chapter 1} for details. As a result, we obtain an abelian category, denoted $\mbox{Rep}(Q)$ of representations of the quiver $Q$. The derived category of $\mbox{Rep}(Q)$ is denoted $\mbox{D}(Q)$.
\end{remark}

Our assumption that $Q$ is acyclic has the following important consequence.
\begin{theorem}
The submodule of every projective module in $\mbox{Rep}(Q)$ is projective. (In other words, $\mbox{Rep}(Q)$ is a \emph{hereditary} category.)
\end{theorem}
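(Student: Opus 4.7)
The plan is to exhibit, for every representation $M$ of $Q$, a projective resolution of length at most one; the hereditary property then follows formally. Indeed, if $K \hookrightarrow P$ is a submodule of a projective, applying $\mbox{Hom}(-, N)$ to $0 \to K \to P \to P/K \to 0$ yields $\mbox{Ext}^1(K, N) \cong \mbox{Ext}^2(P/K, N)$ for every $N$, and the vanishing of $\mbox{Ext}^2$ on the right (which a length-one resolution supplies for every module) forces $K$ to be projective.

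My first step is to construct the indecomposable projectives. For each vertex $i \in Q_0$, I would define a representation $P_i$ by declaring $(P_i)_j$ to be the $\mathbb{C}$-vector space with basis the paths in $Q$ from $i$ to $j$, and letting an arrow $b : j \to k$ act by left-concatenation. Finiteness and acyclicity of $Q$ together guarantee that each $(P_i)_j$ is finite-dimensional, so $P_i \in \mbox{Rep}(Q)$. A straightforward Yoneda-type computation gives a natural isomorphism $\mbox{Hom}_{\mbox{Rep}(Q)}(P_i, V) \cong V_i$; hence $\mbox{Hom}(P_i, -)$ is the exact functor of evaluation at vertex $i$, and each $P_i$ is projective.

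Next, for an arbitrary representation $M$, I would write down the canonical sequence
$$0 \to \bigoplus_{a \in Q_1} P_{t(a)} \otimes_{\mathbb{C}} M_{s(a)} \xrightarrow{d} \bigoplus_{i \in Q_0} P_i \otimes_{\mathbb{C}} M_i \xrightarrow{\varepsilon} M \to 0,$$
where $\varepsilon$ sends a path $p : i \to j$ tensored with $v \in M_i$ to $M_p(v) \in M_j$, and $d$, on the $a$-summand, is the difference between the map that pre-concatenates paths with the arrow $a$ (landing in the summand $P_{s(a)} \otimes M_{s(a)}$) and the map $\mbox{id}_{P_{t(a)}} \otimes M_a$ (landing in the summand $P_{t(a)} \otimes M_{t(a)}$). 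Both outer terms are direct sums of the $P_i$'s, and are therefore projective.

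The main obstacle is verifying exactness of this sequence. Surjectivity of $\varepsilon$ is immediate, coming from the trivial paths at each vertex. Exactness in the middle and injectivity of $d$ are where acyclicity of $Q$ enters decisively: choosing a topological ordering $Q_0 = \{1 < 2 < \cdots < n\}$ so that every arrow points from a smaller to a larger vertex, one can filter the complex vertex by vertex and reduce the required vanishings to a telescoping identity handled by induction on $n$. (Equivalently, one recognises the complex as $M$ tensored with the standard bimodule presentation of the path algebra, whose exactness is classical.) Once the sequence is established, $M$ admits a projective resolution of length at most one, and the hereditary property follows by the formal argument of the first paragraph.
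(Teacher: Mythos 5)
The paper gives no argument of its own for this statement---its ``proof'' is just the citation \cite{DW2017}*{Theorem 2.3.2}---so the comparison is between your write-up and the standard textbook proof, which is exactly what you have reproduced. Your route is correct: you build the indecomposable projectives $P_i$ (paths starting at $i$, arrows acting by composition), note that finiteness plus acyclicity makes each $P_i$ finite-dimensional and that $\mbox{Hom}(P_i,V)\cong V_i$ is exact in $V$, and then invoke the standard (Ringel) resolution
\[
 0 \to \bigoplus_{a \in Q_1} P_{t(a)} \otimes_{\mathbb{C}} M_{s(a)} \xrightarrow{\ d\ } \bigoplus_{i \in Q_0} P_i \otimes_{\mathbb{C}} M_i \xrightarrow{\ \varepsilon\ } M \to 0
\]
to conclude that every representation has projective dimension at most one; the dimension shift $\mbox{Ext}^1(K,N)\cong\mbox{Ext}^2(P/K,N)=0$ then shows any subrepresentation $K$ of a projective $P$ satisfies $\mbox{Ext}^1(K,-)=0$, hence is projective (splitting a surjection from a projective onto $K$, which exists because the category has enough projectives of finite dimension). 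Your maps $\varepsilon(p\otimes v)=M_p(v)$ and $d(p\otimes v)=pa\otimes v - p\otimes M_a(v)$ are the right ones and visibly satisfy $\varepsilon\circ d=0$. The only part left as a sketch is exactness of the sequence; this is routine and classical---injectivity of $d$ and exactness in the middle follow from a leading-term argument with respect to path length along a topological ordering of $Q_0$, or from identifying the complex with the bimodule presentation of $\mathbb{C}Q$ tensored with $M$---and it is essentially the content of the cited result in Derksen--Weyman. So you have not taken a genuinely different route; you have supplied, correctly, the argument the paper outsources, with the small added value of making explicit where acyclicity enters (finite-dimensionality of the $P_i$ and of $\mathbb{C}Q$).
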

\begin{proof}
See \cite{DW2017}*{Theorem 2.3.2}.
\end{proof}
\begin{corollary}
For any two quiver representations $V$ and $W$, $\mbox{Ext}^{i}(V,W)=0$ for $i \geq 2$.
\end{corollary}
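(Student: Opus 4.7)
The plan is to deduce the vanishing of higher $\mbox{Ext}$ groups directly from the hereditary property of $\mbox{Rep}(Q)$, by showing that every object admits a projective resolution of length at most one.

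First I would invoke the standard fact that $\mbox{Rep}(Q)$, viewed as the category of modules over the path algebra $\mathbb{C}Q$, has enough projectives. (The indecomposable projectives are the representations $P_i$ associated to the vertices $i \in Q_0$, whose vector space at $j$ is spanned by paths from $i$ to $j$.) In particular, for any representation $V$ there exists a projective $P_0$ and a surjection $P_0 \twoheadrightarrow V$.

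Next, let $K = \ker(P_0 \to V)$, so that we have a short exact sequence
\[
0 \to K \to P_0 \to V \to 0.
\]
Since $K$ is a subrepresentation of the projective $P_0$, the theorem above (that $\mbox{Rep}(Q)$ is hereditary) implies that $K$ itself is projective. Therefore the above sequence is already a projective resolution of $V$, of length at most one.

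Finally, since $\mbox{Ext}^{i}(V,W)$ can be computed as the $i$-th cohomology of the complex $\mbox{Hom}(P_{\bullet}, W)$ obtained from any projective resolution $P_{\bullet} \to V$, and since our resolution has $P_i = 0$ for $i \geq 2$, we conclude $\mbox{Ext}^{i}(V,W)=0$ for all $i \geq 2$. The only conceptual step is the invocation of the hereditary property (already given), so there is no real obstacle; the argument is essentially a one-line consequence of the preceding theorem once one writes down the short projective resolution.
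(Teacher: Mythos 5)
Your argument is correct and is exactly the standard deduction the paper has in mind: the corollary is stated without proof as an immediate consequence of the hereditary property, and your length-one projective resolution (surjection from a projective with projective kernel) is precisely that consequence made explicit. No gaps; the only implicit point worth keeping in mind is that since $Q$ is finite and acyclic, $\mathbb{C}Q$ is finite-dimensional, so the projectives and the kernel stay within the category of finite-dimensional representations, which your argument tacitly uses.
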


This allows us to define the \emph{Euler characteristic} and the \emph{Euler form}.
\begin{definition}
Given two quiver representations $V$ and $W$, the \emph{Euler characteristic} is defined by
\[
 \chi(V, W) = \mbox{hom}(V,W) - \mbox{ext}(V,W).
\]
\end{definition}
\begin{definition}
Given two dimension vectors $\alpha, \beta \in \mathbb{Z}^{Q_0}$, the \emph{Euler form} is defined by
\[
 \langle \alpha, \beta \rangle = \sum_{i \in Q_0} \alpha_i \beta_i - \sum_{a \in Q_1} \alpha_{s(a)} \beta_{t(a)}.
\]
\end{definition}
\begin{remark}
The Euler form is a bilinear form, which is neither symmetric nor skew-symmetric in general.
\end{remark}

The Euler characteristic and the Euler form are related by the following formula.
\begin{proposition}
Given two quiver representations $V$ and $W$, we have
\[
 \chi(V,W) = \langle \underline{d}(V), \underline{d}(W) \rangle.
\]
\end{proposition}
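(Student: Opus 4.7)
The plan is to prove the identity by dévissage to the case of simple representations, exploiting that both sides are additive on short exact sequences in each variable.

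First I would record the additivity of the two pairings. The Euler form $\langle -, - \rangle$ is bilinear on $\mathbb{Z}^{Q_0}$ by definition, and dimension vectors are additive across short exact sequences, so $\langle -, - \rangle$ is biadditive on $\mathrm{Rep}(Q)$. For the Euler characteristic, a short exact sequence $0 \to V' \to V \to V'' \to 0$ induces a six-term long exact sequence after applying $\mathrm{Hom}(-,W)$ (or $\mathrm{Hom}(W,-)$), which terminates because $\mathrm{Ext}^{i} = 0$ for $i \geq 2$ by the previous corollary; taking alternating sums of dimensions gives additivity of $\chi(-,W)$ and $\chi(V,-)$.

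Next I would use the fact that $Q$ is finite and acyclic to produce, for any finite-dimensional representation $V$, a finite filtration whose successive quotients are the simple representations $S_i$ (the one-dimensional representation concentrated at vertex $i$, with all arrow maps zero), each vertex $i$ appearing $\dim V_i$ times. Combined with the additivity just established, this reduces the identity $\chi(V,W) = \langle \underline{d}(V), \underline{d}(W) \rangle$ to the case $V = S_i$, $W = S_j$.

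The last step is the direct computation on simples. On the one hand, $\langle e_i, e_j \rangle = \delta_{ij} - n_{ij}$, where $n_{ij}$ is the number of arrows from $i$ to $j$. On the other hand, $\mathrm{Hom}(S_i, S_j) = \mathbb{C}\cdot \delta_{ij}$ is immediate from the definition, and $\mathrm{ext}^{1}(S_i, S_j) = n_{ij}$ follows from the standard projective resolution
\[
 0 \to \bigoplus_{a \in Q_1,\, s(a)=i} P_{t(a)} \to P_i \to S_i \to 0,
\]
where $P_k$ denotes the indecomposable projective at $k$, together with $\mathrm{Hom}(P_k, S_j) = (S_j)_k = \mathbb{C}\cdot \delta_{kj}$. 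Thus $\chi(S_i,S_j) = \delta_{ij} - n_{ij} = \langle e_i, e_j \rangle$, completing the proof.

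I do not expect a real obstacle here: the only point requiring minor care is the additivity of $\chi$ in the second variable (one must apply $\mathrm{Hom}(V,-)$ rather than $\mathrm{Hom}(-,W)$ and check the signs), but this is routine given the hereditary vanishing. Everything else is a bookkeeping exercise against the standard projective resolution of a simple quiver representation.
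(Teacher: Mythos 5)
Your proof is correct. Note, however, that the paper does not prove this proposition at all: it simply cites Derksen--Weyman, Proposition 2.5.2, and the argument given there is different from yours. The standard proof in that reference uses the canonical four-term exact sequence
\[
 0 \to \mbox{Hom}(V,W) \to \bigoplus_{i \in Q_0} \mbox{Hom}_{\mathbb{C}}(V_i, W_i) \to \bigoplus_{a \in Q_1} \mbox{Hom}_{\mathbb{C}}(V_{s(a)}, W_{t(a)}) \to \mbox{Ext}^1(V,W) \to 0,
\]
from which the identity follows in one step by taking alternating sums of dimensions; this has the added benefit of realizing $\mbox{Hom}$ and $\mbox{Ext}^1$ concretely as kernel and cokernel of an explicit linear map, which is useful elsewhere in the theory. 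Your route instead proceeds by d\'evissage: biadditivity of both pairings on short exact sequences (using the hereditary vanishing $\mbox{Ext}^{\geq 2}=0$ to truncate the long exact sequences, in both variables), a Jordan--H\"older filtration of any representation by the simples $S_i$ with multiplicities $\dim V_i$ (valid since the path algebra of a finite acyclic quiver is finite-dimensional and its simples are exactly the $S_i$), and the computation $\chi(S_i,S_j)=\delta_{ij}-n_{ij}$ via the standard projective resolution of $S_i$, where the connecting map $\mbox{Hom}(P_i,S_j) \to \mbox{Hom}(\bigoplus_{s(a)=i} P_{t(a)}, S_j)$ vanishes because the radical of $P_i$ acts trivially on $S_j$. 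All of these steps check out, so your argument is a complete, if somewhat longer, alternative to the one-line dimension count from the canonical sequence.
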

\begin{proof}
See \cite{DW2017}*{Proposition 2.5.2}.
\end{proof}

\begin{example}\label{example:k3_euler_ringel_form}
Consider the \emph{Kronecker quiver} $K_3$ with three arrows
\[
 1 \triplerightarrow 2
\]
and two elements $\underline{d} = (d_1, d_2)$ and $\underline{d}^{\prime} = (d_1^{\prime}, d_2^{\prime})$. Then we have
\[
 \langle \underline{d}, \underline{d}^{\prime} \rangle = d_1 d_1^{\prime} + d_2 d_2^{\prime} - 3 d_1 d_2^{\prime}.
\]
\end{example}

\begin{example}\label{example:s4_euler_ringel_form}
Consider the quiver $S_4$
\[
 \begin{tikzcd}
	& 1 & \\
	0\arrow[shift left]{ur}\arrow[shift right,swap]{ur}\arrow[shift left]{dr}\arrow[shift right,swap]{dr} & & \\
	& 2 &
 \end{tikzcd}
\]
and two elements $\underline{d} = (d_0, d_1, d_2)$ and $\underline{d}^{\prime} = (d_0^{\prime}, d_1^{\prime}, d_2^{\prime})$. Then,
\[
 \langle \underline{d}, \underline{d}^{\prime} \rangle = d_0 d_0^{\prime} + d_1 d_1^{\prime} + d_2 d_2^{\prime} - 2 d_0 d_1^{\prime} - 2 d_0 d_2^{\prime}.
\]
\end{example}

\begin{example}\label{example:k32_euler_ringel_form}
Consider the quiver $K_{3,2}$
\[
 \begin{tikzcd}
 0
 \arrow[dr]
 \arrow[dddr] & \\
 & 3 \\
 1
 \arrow[ur]
 \arrow[dr] & \\
 & 4 \\
 2
 \arrow[ur]
 \arrow[uuur] &
 \end{tikzcd}
\]
and two elements $\underline{d} = (d_0, d_1, d_2, d_3, d_4)$ and $\underline{d}^{\prime} = (d_0^{\prime}, d_1^{\prime}, d_2^{\prime}, d_3^{\prime}, d_4^{\prime})$. Then,
\[
 \langle \underline{d}, \underline{d}^{\prime} \rangle = \sum_{i=0}^4 d_i d_i^{\prime} - d_0 d_3^{\prime} - d_0 d_4^{\prime} - d_1 d_3^{\prime} - d_1 d_4^{\prime} - d_2 d_3^{\prime} - d_2 d_4^{\prime}.
\]
\end{example}

\begin{example}\label{example:k51_euler_ringel_form}
Consider the quiver $K_{5,1}$
\[
 \begin{tikzcd}
 0\arrow[ddr] \\
 1\arrow[dr] \\
 2\arrow[r] & 5 \\
 3\arrow[ur] \\
 4\arrow[uur]
 \end{tikzcd}
\]
and two elements $\underline{d} = (d_0, d_1, d_2, d_3, d_4, d_5)$ and $\underline{d}^{\prime} = (d_0^{\prime}, d_1^{\prime}, d_2^{\prime}, d_3^{\prime}, d_4^{\prime}, d_5^{\prime})$. Then,
\[
 \langle \underline{d}, \underline{d}^{\prime} \rangle = \sum_{i=0}^5 d_i d_i^{\prime} - \sum_{i=0}^4 d_i d_5^{\prime}.
\]
\end{example}

Given a quiver $Q$, the \emph{path algebra} $\mathbb{C}Q$ is an associative $\mathbb{C}$-algebra defined using \emph{paths}.
\begin{definition}
A \emph{path} $p$ in a quiver $Q$ of length $\ell$ is a sequence of $\ell$ arrows where the head of each arrow is the tail of the next arrow. The \emph{tail} $tp$ of $p$ is the first vertex in the sequence; the \emph{head} $hp$ of $p$ is the last vertex in the sequence. For every $i \in Q_0$, we have the \emph{trivial path} $e_i$ with head and tail both equal to $i$ and the length of $e_i$ is 0.
\end{definition}
\begin{definition}
The \emph{path algebra} $\mathbb{C}Q$ is the associative $\mathbb{C}$-algebra with basis given by the set of all paths in $Q$ and with multiplication $pq$ of two paths $p$ and $q$ given by the concatenation of paths if $tp=hq$ and 0 otherwise.
\end{definition}
\begin{remark}
It can be easily seen that there are finitely many paths in an \emph{acyclic} quiver $Q$. Hence, in this case, $\mathbb{C}Q$ is a finite-dimensional algebra.
\end{remark}

Let $\mathfrak{m} \subset \mathbb{C}Q$ be the two-sided ideal generated by all the arrows in $Q$.
\begin{definition}
A two-sided ideal $J \subset \mathbb{C}Q$ such that $\mathfrak{m}^k \subset J \subset \mathfrak{m}^2$ for some integer $k \geq 2$ is called \emph{admissible}.
\end{definition}

Admissible ideals are used to define quivers with relations.
\begin{definition}
A \emph{quiver with relations} $(Q;J)$ is a quiver $Q$ together with an admissible ideal $J \subset \mathbb{C}Q$. A \emph{representation} of a quiver with relations $(Q;J)$ is a representation $V$ of $Q$ that satisfies all relations $r \in J$.
\end{definition}
\begin{remark}
Just as in Remark \ref{remark:rep_q}, the representations of a quiver with representations $(Q;J)$ form an abelian category, denoted $\mbox{Rep}(Q;J)$. The derived category of $\mbox{Rep}(Q;J)$ is denoted $\mbox{D}(Q;J)$.
\end{remark}

\begin{remark}\label{remark:hyperbolic_quivers}
Quivers are classified as \emph{finite}, \emph{affine}, and \emph{indefinite} types. Quivers of finite and affine types can further be classified as members of finitely many families. See \cite{Kac1990}*{Chapter 4} for details and lists of families of quivers of finite and affine types.

A quiver $Q$ of indefinite type is called \emph{hyperbolic} if any connected proper subdiagram of $Q$ is of finite or affine type. It follows that the quivers $K_3$, $S_4$, $K_{3,2}$, and $K_{5,1}$ are hyperbolic.
\end{remark}

\subsection{Exceptional Collections}
In this subsection, $\mbox{D}(X)$ denotes the bounded derived category of coherent sheaves on $X$. The definitions and notations in this subsection follow those in \cite{Maiorana2017}. See also \cite{GK2004}.

\begin{definition}
An object $E \in \mbox{D}(X)$ is called \emph{exceptional} if
\[
 \mbox{Hom}(E, E[k]) =
 \begin{cases}
  \mathbb{C} & \text{if $k = 0$,} \\
  0 & \text{if $k \neq 0$.}
 \end{cases}
\]
\end{definition}


\begin{definition}
A sequence $\mathfrak{E} = (E_0, \ldots, E_n)$ of exceptional objects in $\mbox{D}(X)$ is called an \emph{exceptional sequence} if $\mbox{Hom}(E_i,E_j[k])=0$ for all $i > j$ and all $k$. It is called a \emph{strong exceptional sequence} if, in addition, $\mbox{Hom}(E_i,E_j[k])=0$ for all $i,j$ and all $k \neq 0$. It is called \emph{full} if the objects $E_0, \ldots, E_n$ generate $\mbox{D}(X)$.
\end{definition}

\begin{definition}\label{definition:exceptional_collections}
Given an exceptional collection $\mathfrak{E} = (E_0, \ldots, E_n)$, two exceptional collections $^{\vee}\mathfrak{E} = (^{\vee}E_n, \ldots, ^{\vee}E_0)$ and $\mathfrak{E}^{\vee} = (E_n^{\vee}, \ldots, E_0^{\vee})$ are called \emph{left dual} and \emph{right dual} collections, respectively, if
\[
 \mbox{Hom}(^{\vee}E_i, E_j[k]) =
 \begin{cases}
  \mathbb{C} & \text{if $i = j = n-k$,} \\
  0 & \text{otherwise,}
 \end{cases}
\]
and
\[
 \mbox{Hom}(E_i, E_j^{\vee}[k]) =
 \begin{cases}
  \mathbb{C} & \text{if $i=j=k$,} \\
  0 & \text{otherwise.}
 \end{cases}
\]
\end{definition}

Given a full exceptional collection $\mathfrak{E} = (E_0, \ldots, E_n)$, left dual and right dual collections to $\mathfrak{E}$ exist and are unique. They can be obtained by mutations.
\begin{definition}[cf. \cite{GK2004}*{2.2}]
Given an exceptional object $E \in \mbox{D}(X)$, and any object $F \in \mbox{D}(X)$, the \emph{left mutation} $\mathbb{L}_E (F)$ and the \emph{right mutation} $\mathbb{R}_E (F)$ of $F$ with respect to $E$ are defined by the distinguished triangles
\[
 \mathbb{L}_E (F) \to \mbox{RHom}(E,F) \otimes E \to F \to \mathbb{L}_E (F)[1]
\]
and
\[
 \mathbb{R}_E (F)[-1] \to F \to \mbox{RHom}(F,E)^{\vee} \otimes E \to \mathbb{R}_E (F).
\]
\end{definition}
\begin{remark}[cf. \cite{GK2004}*{2.6}]
Given a full exceptional collection $\mathfrak{E} = (E_0, \ldots, E_n)$, its left dual and right dual collections can be constructed using consecutive mutations by
\[
 ^{\vee}E_{n-k} = \mathbb{R}_{E_n} \mathbb{R}_{E_{n-1}} \ldots \mathbb{R}_{E_{n-k+1}} E_{n-k}
\]
and
\[
 E_{n-k}^{\vee} = \mathbb{L}_{E_0} \mathbb{L}_{E_1} \ldots \mathbb{L}_{E_{n-k-1}} E_{n-k}.
\]
(We note that our indexing differs from that of \cite{GK2004}.)
\end{remark}
\begin{remark}\label{remark:twisting_collections}
Given a full exceptional collection $\mathfrak{E}$, its left and right duals $^{\vee}E$ and $E^{\vee}$, and a line bundle $\mathcal{L}$ on $X$, the elements in these collections can all be twisted by $\mathcal{L}$ and the duality relations will be preserved. Strongness is also preserved by twisting by $\mathcal{L}$.
\end{remark}

\begin{example}[cf. \cite{Maiorana2017}*{Example 2.29 (1)}]\label{example:p2_exceptional_collections}
For any $d \in \mathbb{Z}$, the derived category $\mbox{D}(\mathbb{P}^2)$ admits a full strong exceptional collection
\[
 \mathfrak{E} = (E_0, E_1, E_2) = (\mathcal{O}(d-3), \mathcal{O}(d-2), \mathcal{O}(d-1)),
\]
which admits the left dual
\[
 ^{\vee}\mathfrak{E} = (^{\vee}E_2, ^{\vee}E_1, ^{\vee}E_0) = (\mathcal{O}(d-1), \Omega(d+1), \mathcal{O}(d)),
\]
which is also strong.
\end{example}

\begin{example}[cf. \cite{Maiorana2017}*{Example 2.29 (2)}]\label{example:p1_times_p1_exceptional_collections}
For any $a, b \in \mathbb{Z}$, the derived category $\mbox{D}(\mathbb{P}^1 \times \mathbb{P}^1)$ admits a full exceptional collection
\[
 \mathfrak{E} = (E_0, E_1, E_2, E_3) = (\mathcal{O}(a-1,b-1)[-1], \mathcal{O}(a-1,b)[-1], \mathcal{O}(a,b-1), \mathcal{O}(a,b)),
\]
which admits the left dual
\[
 ^{\vee}\mathfrak{E} = (^{\vee}E_3, ^{\vee}E_2, ^{\vee}E_1, ^{\vee}E_0) = (\mathcal{O}(a,b), \mathcal{O}(a,b+1), \mathcal{O}(a+1,b), \mathcal{O}(a+1,b+1)).
\]
$^{\vee}\mathfrak{E}$ is a strong exceptional collection, but $\mathfrak{E}$ is not.
\end{example}

\begin{example}\label{example:x3_exceptional_collections}
Let $X_3$ denote the \emph{del Pezzo sextic}, i.e. the del Pezzo surface obtained by blowing up three points in general position in $\mathbb{P}^2$. Let $\pi: X_3 \to \mathbb{P}^2$ be the blowdown map. The Picard group of $X_3$ is a free abelian group of rank four; the generators are $\ell$, the pullback of the class of a line in $\mathbb{P}^2$, and the three exceptional divisors $\ell_i$ for $i=1,2,3$.

By \cite{KN1998}*{Proposition 4.2}, there exists a full strong exceptional collection on $X_3$, given by
\[
 (F_0, F_1, F_2, F_3, F_4, F_5) =
\]
\[
 (\mathcal{O}, \mathcal{O}(2\ell-\ell_1-\ell_2-\ell_3), \mathcal{O}(\ell), \mathcal{O}(2\ell-\ell_1-\ell_2),\mathcal{O}(2\ell-\ell_1-\ell_3),\mathcal{O}(2\ell-\ell_2-\ell_3)).
\]
Using left mutations, this can be seen to have the right dual collection
\[
 (F_5^{\vee}, F_4^{\vee}, F_3^{\vee}, F_2^{\vee}, F_1^{\vee}, F_0^{\vee}) =
\]
\[
 (\mathcal{O}(\ell_1-\ell)[-3], \mathcal{O}(\ell_2-\ell)[-2], \mathcal{O}(\ell_3-\ell)[-1], \mathcal{P}[-1], \mathcal{K}, \mathcal{O}),
\]
where $\mathcal{P}$ is given by
\[
 0 \to \mathcal{P} \to \mathcal{O}^3 \to \mathcal{O}(\ell) \to 0,
\]
and $\mathcal{K}$ is given by
\[
 0 \to \mathcal{K} \to \mathcal{O}^3 \to \mathcal{O}(2 \ell - \ell_1 - \ell_2 - \ell_3) \to 0.
\]
By Definition \ref{definition:exceptional_collections}, it is clear that, if we now relabel
\[
 \mathfrak{E} = (E_0, E_1, E_2, E_3, E_4, E_5) =
\]
\[
 (\mathcal{O}(\ell_1-\ell)[-3], \mathcal{O}(\ell_2-\ell)[-2], \mathcal{O}(\ell_3-\ell)[-1], \mathcal{P}[-1], \mathcal{K}, \mathcal{O}),
\]
then the left dual collection is given by
\[
 ^{\vee}\mathfrak{E} = (^{\vee}E_5, ^{\vee}E_4, ^{\vee}E_3, ^{\vee}E_2, ^{\vee}E_1, ^{\vee}E_0) =
\]
\[
 (\mathcal{O}, \mathcal{O}(2\ell-\ell_1-\ell_2-\ell_3), \mathcal{O}(\ell), \mathcal{O}(2\ell-\ell_1-\ell_2),\mathcal{O}(2\ell-\ell_1-\ell_3),\mathcal{O}(2\ell-\ell_2-\ell_3)).
\]
\end{example}

\begin{example}\label{example:x4_exceptional_collections}
Let $X_4$ denote the \emph{del Pezzo quintic}, i.e. the del Pezzo surface obtained by blowing up four points in general position in $\mathbb{P}^2$. Let $\pi: X_4 \to \mathbb{P}^2$ be the blowdown map. The Picard group of $X_4$ is a free abelian group of rank five; the generators are $\ell$, the pullback of the class of a line in $\mathbb{P}^2$, and the four exceptional divisors $\ell_i$ for $i=1,2,3,4$.

By \cite{KN1998}*{Proposition 4.2}, there exists a full strong exceptional collection on $X_4$, given by
\begin{align*}
 (F_0, F_1, F_2, F_3, F_4, F_5, F_6) = (& \mathcal{O}, \mathcal{F}, \mathcal{O}(\ell), \mathcal{O}(2\ell - \ell_2 - \ell_3 - \ell_4), \mathcal{O}(2\ell - \ell_1 - \ell_3 - \ell_4), \\
 & \mathcal{O}(2\ell - \ell_1 - \ell_2 - \ell_4), \mathcal{O}(2\ell - \ell_1 - \ell_2 - \ell_3).
\end{align*}
Here, $\mathcal{F}$ is a rank-2 vector bundle obtained as the universal extension
\[
 0 \to \mathcal{O}(2\ell - \ell_1 - \ell_2 - \ell_3 - \ell_4) \to \mathcal{F} \to \mathcal{O}(\ell) \to 0.
\]
Using left mutations, this can be seen to have the right dual collection
\[
 (F_6^{\vee}, F_5^{\vee}, F_4^{\vee}, F_3^{\vee}, F_2^{\vee}, F_1^{\vee}, F_0^{\vee}) =
\]
\[
 (\mathcal{O}(\ell_4-\ell)[-4], \mathcal{O}(\ell_3-\ell)[-3], \mathcal{O}(\ell_2-\ell)[-2], \mathcal{O}(\ell_1-\ell)[-1], \mathcal{O}(-2\ell + \ell_1 + \ell_2 + \ell_3 + \ell_4), \mathcal{K}, \mathcal{O}),
\]
where $\mathcal{K}$ is given by
\[
 0 \to \mathcal{K} \to \mathcal{O}^5 \to \mathcal{F} \to 0.
\]
By Definition \ref{definition:exceptional_collections}, it is clear that, if we now relabel
\[
 \mathfrak{E} = (E_0, E_1, E_2, E_3, E_4, E_5, E_6) =
\]
\[
 (\mathcal{O}(\ell_4-\ell)[-4], \mathcal{O}(\ell_3-\ell)[-3], \mathcal{O}(\ell_2-\ell)[-2], \mathcal{O}(\ell_1-\ell)[-1], \mathcal{O}(-2\ell + \ell_1 + \ell_2 + \ell_3 + \ell_4), \mathcal{K}, \mathcal{O}),
\]
then the left dual collection is given by
\begin{align*}
 ^{\vee}\mathfrak{E} &= (^{\vee}E_6, ^{\vee}E_5, ^{\vee}E_4, ^{\vee}E_3, ^{\vee}E_2, ^{\vee}E_1, ^{\vee}E_0) = (\mathcal{O}, \mathcal{F}, \mathcal{O}(\ell), \mathcal{O}(2\ell - \ell_2 - \ell_3 - \ell_4), \\
 & \mathcal{O}(2\ell - \ell_1 - \ell_3 - \ell_4), \mathcal{O}(2\ell - \ell_1 - \ell_2 - \ell_4), \mathcal{O}(2\ell - \ell_1 - \ell_2 - \ell_3).
\end{align*}
\end{example}

\begin{theorem}\label{theorem:bondal}
Suppose there exists a full strong exceptional collection $(^{\vee}E_n, \ldots, ^{\vee}E_0)$ on $D(X)$. Then there exists a quiver with relations $(Q;J)$, and a triangulated equivalence
\[
 \Phi: \mbox{D}(X) \to \mbox{D}(Q; J).
\]
\end{theorem}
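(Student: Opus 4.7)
The plan is to construct the quiver with relations $(Q;J)$ from the endomorphism algebra of the tilting object $T := \bigoplus_{i=0}^{n} {}^{\vee}E_i$, and then to invoke Rickard-type tilting theory to produce $\Phi$. First I would verify that $T$ is a tilting object in $\mathrm{D}(X)$: strongness of the collection gives $\mathrm{Hom}(T, T[k]) = 0$ for all $k \neq 0$, so the higher self-Ext's vanish; fullness gives that $T$ classically generates $\mathrm{D}(X)$ as a triangulated category; and since each ${}^{\vee}E_i$ is exceptional, $\mathrm{End}({}^{\vee}E_i) = \mathbb{C}$.

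Second, set $A := \mathrm{End}_{\mathrm{D}(X)}(T)$. The orthogonal idempotents $e_i$ projecting onto the summand ${}^{\vee}E_i$ are primitive (because $e_i A e_i = \mathbb{C}$) and sum to $1$, so $A$ is a basic finite-dimensional $\mathbb{C}$-algebra. I would then apply Gabriel's construction: form the quiver $Q$ with vertex set $\{0, \ldots, n\}$ and with arrows from $i$ to $j$ given by a basis of $e_j(\mathrm{rad}\, A / \mathrm{rad}^2\, A) e_i$. The exceptional-sequence vanishing $\mathrm{Hom}({}^{\vee}E_i, {}^{\vee}E_j[k]) = 0$ for $i < j$ (under the ordering of ${}^{\vee}\mathfrak{E}$) forces all arrows to point one way, so $Q$ is acyclic, hence finite. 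The canonical surjection $\mathbb{C}Q \twoheadrightarrow A$ then has a kernel $J$ contained in $\mathfrak{m}^2$, and finite-dimensionality of $A$ forces $\mathfrak{m}^k \subset J$ for some $k$, so $J$ is admissible and $A \cong \mathbb{C}Q / J$.

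Third, I would invoke the tilting equivalence: because $T$ is a tilting object with endomorphism algebra $A$, the functor
\[
 \Phi := \mathrm{RHom}(T, -) : \mathrm{D}(X) \longrightarrow \mathrm{D}(A\text{-}\mathrm{mod})
\]
is a triangulated equivalence. Finally, the standard identification $A\text{-}\mathrm{mod} \simeq \mathrm{Rep}(Q;J)$ passes to derived categories and yields the required equivalence with $\mathrm{D}(Q;J)$.

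The main obstacle is the tilting equivalence itself. Faithfulness of $\Phi$ on the generating family $\{{}^{\vee}E_i\}$ is tautological from the definition of $A$; fullness on this family follows from the strong hypothesis. The hard point is extending these to all of $\mathrm{D}(X)$ and deducing essential surjectivity: this requires that $T$ has finite Ext-amplitude (immediate from smoothness of $X$ and boundedness of $\mathrm{D}(X)$) and that classical generation upgrades to a \emph{thick} generation statement, so that every object in $\mathrm{D}(X)$ is reached by successive cones. Granting this standard input (which is precisely the content of Bondal's original argument and of the Bondal--Van den Bergh generation theorem), the remaining steps are formal. The quiver-theoretic repackaging — passing from $A$ to $(Q;J)$ — is entirely bookkeeping and should not cause difficulty.
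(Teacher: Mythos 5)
Your proposal is correct and follows essentially the same route as the paper: form the tilting object $T=\bigoplus_{i}{}^{\vee}E_i$, present its endomorphism algebra as a bound quiver algebra, and obtain $\Phi=\mathrm{RHom}(T,-)$ as the triangulated equivalence, with the core tilting/generation step deferred to Bondal's theorem exactly as the paper does. The only cosmetic difference is the left/right module convention: the paper records $\mathrm{End}(T)\cong(\mathbb{C}Q/J)^{op}$ so that arrows point toward increasing vertex labels, which is the same bookkeeping you handle by your choice of $e_j(\mathrm{rad}\,A/\mathrm{rad}^2 A)e_i$.
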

\begin{proof}
Let $T := \oplus_{i=0}^{n} {^{\vee}E_i} \in \mbox{D}(X)$. Since the endomorphism algebra $A = End(T)$ is basic, it is isomorphic to $(\mathbb{C}Q/J)^{op}$ for a quiver with relations $(Q;J)$. The vertices of this quiver can be labeled with the integers $\{ 0, \ldots, n \}$ and every arrow points in the direction of increasing numbers. Then, the functor
\[
 \Phi = R\mbox{Hom}(T, \cdot) : \mbox{D}(X) \to \mbox{D}(Q;J),
\]
which maps an object $V \in \mbox{D}(X)$ to a complex of representations that has the graded vector space $R\mbox{Hom}(^{\vee}E_i,V)$ at the vertex labeled $i$, is a triangulated equivalence. See \cite{Bondal1989}*{Theorem 6.2} for the details of the proof; the version given here is adapted from \cite{Maiorana2017}*{Theorem 2.30}.
\end{proof}
\begin{remark}[cf. \cite{Maiorana2017}*{Remark 2.31}]\label{remark:phi}
$\Phi$ maps $^{\vee}E_i$ to the projective representation $P_i$ of $Q$ corresponding to vertex $i$, and $E_i$ to $S_i [i-n]$, where $S_i$ is the simple representation of $Q$ corresponding to vertex $i$. It follows that the standard heart $\mbox{Rep}(Q;J) \subset \mbox{D}(Q;J)$ corresponds, via $\Phi$ to the extension closure of $E_i [n-i]$ for $i=0, \ldots, n$.
\end{remark}
\begin{remark}
In \cite{Maiorana2017}*{Section 5}, the author includes a shift of 1 in the equivalence $\Phi$. Since this shift is useless for our purposes, we do not include it in this article.
\end{remark}

\section{The Trichotomy Problem}

\subsection{$\mathbb{P}^2$ as a special case}
The case of the projective plane $\mathbb{P}^2$ provides a very simple illustration of the technique of using the relationships between Ulrich bundles and quiver representations to give a solution to the trichotomy problem. Ulrich bundles on $\mathbb{P}^2$, along with projective spaces of higher dimensions, have been investigated in \cite{ESW2003}, and more recently in \cite{CG2017} and \cite{Lin2017}.

Throughout this subsection, all sheaves and cohomologies are considered on $\mathbb{P}^2$ without explicit mention. Also, Ulrich bundles are considered with respect to an arbitrary but fixed polarization $\mathcal{O}(d)$, where $d \geq 1$.

We restate Definition \ref{definition:ulrich} for vector bundles on $\mathbb{P}^2$.
\begin{definition}\label{definition:ulrich_p2}
An \emph{Ulrich bundle} is a vector bundle $\mathcal{E}$ such that $h^{*}(\mathcal{E}(-id)) = 0$ for $i=1,2$.
\end{definition}

Now let $\mathcal{E}$ be an Ulrich bundle of rank $r$. As described in \cite{Maiorana2017}*{Section 5.1}, the full strong collections of Example \ref{example:p2_exceptional_collections} give rise to a tilting sheaf $T := \oplus_{i=0}^{2} {^{\vee}E_i}$ whose endomorphism algebra $A = End(T)$ is isomorphic to the opposite of the bound quiver algebra $\mathbb{C}B_3 / J$ of the \emph{Beilinson quiver}
\[
 B_3:
 \begin{tikzcd}
  0\arrow[bend left=50]{r}{a_1}\arrow{r}{a_2}\arrow[bend right=50]{r}{a_3} &1\arrow[bend left=50]{r}{b_1}\arrow{r}{b_2}\arrow[bend right=50]{r}{b_3} &2
 \end{tikzcd}
\]
where the relations are given by the two-sided ideal $J = (b_i a_j + b_j a_i\, |\, i,j=1,2,3) \subset \mathbb{C}B_3$. Under the triangulated equivalence $\Phi$ described in Theorem \ref{theorem:bondal}, the Ulrich bundle $\mathcal{E}$ is mapped to the complex of quiver representations

\[
 R\mbox{Hom}(\mathcal{O}(d), \mathcal{E}) \triplerightarrow R\mbox{Hom}(\Omega(d+1), \mathcal{E}) \triplerightarrow R\mbox{Hom}(\mathcal{O}(d-1), \mathcal{E}).
\]

The first term vanishes by Definition \ref{definition:ulrich_p2}. The second and third terms are concentrated in degree 0 by \cite{CG2017}*{Theorem 4.2}. It follows that we can consider $\Phi(\mathcal{E})$ to be a complex of representations of the Kronecker quiver $K_3$ concentrated in degree 0, or equivalently, a representation of $K_3$.

\begin{theorem}\label{theorem:ulrich_trichotomy_for_p2}
Ulrich trichotomy holds for $(\mathbb{P}^2, \mathcal{O}(dH))$.
\end{theorem}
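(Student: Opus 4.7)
The strategy is to apply the equivalence $\Phi$ from Theorem \ref{theorem:bondal} to turn the classification of Ulrich bundles on $(\mathbb{P}^2, \mathcal{O}(dH))$ into the classification of representations of the Kronecker quiver $K_3$ with a specific dimension vector, and then to invoke Kac's theorem. Given an Ulrich bundle $\mathcal{E}$ of rank $r$, the analysis preceding the theorem shows that $\Phi(\mathcal{E})$ is concentrated in degree $0$ and has a zero vector space at vertex $0$, so it is effectively a representation $V$ of $K_3$.

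First I compute the dimension vector. Writing $H = dL$ with $L$ the line class, the Ulrich vanishings $\chi(\mathcal{E}(-iH)) = 0$ for $i = 1, 2$ together with the leading coefficient $r/2$ of the Hilbert polynomial force
$$\chi(\mathcal{E}(kL)) = \tfrac{r}{2}(k+d)(k+2d).$$
Plugging $k = -(d-1)$ gives the dimension at vertex $2$; rewriting $R\mbox{Hom}(\Omega(d+1), \mathcal{E})$ via the Euler sequence $0 \to \mathcal{O} \to \mathcal{O}(L)^{\oplus 3} \to T_{\mathbb{P}^2} \to 0$ and using the Ulrich vanishing for the middle term yields the dimension at vertex $1$. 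The result is
$$\underline{d}(V) = \left(\frac{r(d-1)}{2},\ \frac{r(d+1)}{2}\right),$$
and Example \ref{example:k3_euler_ringel_form} then gives $q(\underline{d}(V)) = r^2(5-d^2)/4$ for the Tits form of $K_3$.

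The trichotomy is read off by case analysis. For $d = 1$, $\underline{d}(V) = (0, r)$, and any $K_3$-representation with vanishing vertex-$1$ space decomposes as a sum of copies of the simple $S_2$; indecomposability forces $r = 1$ and produces the single Ulrich bundle $\mathcal{O}$, so the type is finite. For $d = 2$, integrality forces $r$ to be even, $\underline{d}(V) = (r/2)(1, 3)$, and $(1, 3)$ is a real root ($q = 1$) whereas $(k, 3k)$ for $k \geq 2$ has $q = k^2 \geq 4$ and is therefore not a root at all; Kac's theorem yields a unique indecomposable of rank $2$ and none for higher ranks, so the type is again finite. For $d \geq 3$, the vector $\underline{d}(V)$ lies in the fundamental region of $K_3$ with $q < 0$, so it is a positive imaginary root. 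Through $\Phi$ we have $\chi(\mathcal{E}^{\vee} \otimes \mathcal{E}) = q(\underline{d}(V))$, and combining this with Remark \ref{remark:simple_ulrich_bundle_dimension} and Corollary \ref{corollary:ulrich_h2}, any simple Ulrich bundle of rank $r$ gives a smooth point of dimension $1 - q = 1 + r^2(d^2 - 5)/4$ in the moduli space; letting $r$ grow yields wild type.

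The main obstacle is the wild case: I need existence of simple Ulrich bundles of arbitrarily large rank in order to exploit the moduli-dimension formula, since the correspondence $\mathcal{E} \mapsto \Phi(\mathcal{E})$ is only an inclusion into the category of $K_3$-representations with the prescribed dimension vector. I plan to address this by producing, for each sufficiently large $r$, an indecomposable $K_3$-representation of the required dimension vector (available from Kac's theorem) and showing that the corresponding object under $\Phi^{-1}$ is genuinely a locally free sheaf: such a representation corresponds to a two-term complex built from $\mathcal{O}(d-2)$ and $\mathcal{O}(d-1)$, and for generic representatives in each family the differential is injective with locally free cokernel, while Ulrich-ness is an open condition preserved under these constructions.
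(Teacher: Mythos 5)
Your setup (Bondal's equivalence, reduction to $K_3$-representations, the dimension vector $\underline{d}=(r(d-1)/2,\,r(d+1)/2)$ and the value $\chi(\underline{d},\underline{d})=r^2(5-d^2)/4$) is correct and agrees with what the paper does implicitly; your treatment of $d=1,2$ is fine and is in effect an explicit version of the paper's Case 3, where finiteness is deduced from the constraints $a/b=(d-1)/(d+1)$ and $a^2+b^2-3ab=1$. The genuine gap is exactly the step you flag as the ``main obstacle,'' and your proposed fix does not close it. Saying ``Ulrich-ness is an open condition'' only gives you \emph{generic} Ulrichness in $\mbox{Rep}(n\underline{d})$ once you know the Ulrich locus there is \emph{nonempty}; and proving nonemptiness by showing that a generic matrix of linear forms $\mathcal{O}(d-2)^a\to\mathcal{O}(d-1)^b$ is injective with locally free cokernel satisfying the required $\mbox{h}^2$-vanishing is not a routine genericity remark --- it amounts to re-proving the existence of Ulrich bundles on $(\mathbb{P}^2,\mathcal{O}(d))$, i.e.\ the content of \cite{ESW2003} and \cite{CG2017}, which your proposal does not supply. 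The paper avoids this entirely: it does not determine which type occurs for which $d$; it assumes an Ulrich bundle $\mathcal{E}$ exists (otherwise the type is trivially finite), observes that $\Phi(\mathcal{E}^{\oplus n})$ is an Ulrich point of $\mbox{Rep}(n\underline{d})$ (so the open Ulrich locus is nonempty for every $n$), and uses that $\underline{d}$ is a Schur root (\cite{Kac1980}, Lemma 2.1(e) and Lemma 2.7) together with Schofield's theorem that $n\underline{d}$ is again Schur, so the general element of $\mbox{Rep}(n\underline{d})$ is simultaneously simple and Ulrich; this yields simple Ulrich bundles of rank $nr$ with moduli dimension $1-n^2\chi(\underline{d},\underline{d})\to\infty$. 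Without this device (or without importing the known existence results of \cite{CG2017} and \cite{CMR2018}), your assertion that $d\geq 3$ gives wild type is unproved, and since your argument aims at the stronger statement (identifying the type) rather than the trichotomy itself, the proof of the theorem as stated is incomplete.

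Two smaller points. First, your construction plan produces \emph{indecomposable} representations, whereas the moduli-dimension argument via Remark \ref{remark:simple_ulrich_bundle_dimension} needs \emph{simple} Ulrich bundles (or else Kac's parameter count for imaginary roots), so even granting the lifting to sheaves you would still need the Schur-root input. Second, the claim that $\underline{d}(V)$ lies in the fundamental region is false for $d=3,4$ (the inequality $2b\leq 3a$ fails there); the conclusion that $\underline{d}$ is an imaginary root should instead be drawn, as in the paper, from the fact that the dimension vector of an indecomposable representation is a positive root and $\chi(\underline{d},\underline{d})<0$ rules out the real case.
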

\begin{proof}
If there are no Ulrich bundles, $(\mathbb{P}^2, \mathcal{O}(d))$ is of finite Ulrich representation type.

Now let us assume that there are Ulrich bundles. Recall that the quiver $K_3$ is hyperbolic by Remark \ref{remark:hyperbolic_quivers}.

\emph{Case 1.} There exists an Ulrich bundle $\mathcal{E}$ of rank $r$, which we can assume to be indecomposable without loss of generality by Remark \ref{remark:ulrich}, such that the dimension vector $\underline{d} = (a,b)$ of the associated representation of the quiver $K_3$ satisfies $\chi(\underline{d}, \underline{d}) < 0$. By \cite{Kac1980}*{Proposition 1.6 (a)}, $\underline{d}$ is an imaginary root. Then, by \cite{Kac1980}*{Lemma 2.1 (e) and Lemma 2.7}, $\underline{d}$ is a Schur root. By \cite{Schofield1992}*{Theorem 3.7}, $n \underline{d}$ is a Schur root for every $n \geq 1$. Hence, in $Rep(n \underline{d})$, the general element is simple. Since $\mathcal{E}^{\oplus n}$ is an Ulrich bundle, $\Phi(\mathcal{E}^{\oplus n})$ has dimension vector $n \underline{d}$, and the condition of being Ulrich is open, the general element in $Rep(n \underline{d})$ corresponds via $\Phi$ to a simple, hence indecomposable, Ulrich bundle.

The dimension of the moduli space of these Ulrich bundles can be computed using Remark \ref{remark:simple_ulrich_bundle_dimension} and the Euler-Ringel form as
\[
 1 - \chi(n \underline{d}, n \underline{d}) = 1 - n^2 \chi(\underline{d}, \underline{d}).
\]
It is clear that this dimension tends to infinity as $n \to \infty$. Hence, $(\mathbb{P}^2, \mathcal{O}(d))$ is of wild Ulrich representation type.

\emph{Case 2.} There exists no Ulrich bundle with dimension vector $\underline{d} = (a,b)$ of the associated representation of the quiver $K_3$ such that $\chi(\underline{d}, \underline{d}) < 0$, but there exists an indecomposable Ulrich bundle $\mathcal{E}$ such that the associated representation of the quiver $K_3$ has dimension vector $\underline{d} = (a,b)$ that satisfies $\chi(\underline{d}, \underline{d}) = 0$. Since $\chi(\underline{d}, \underline{d}) = a^2 + b^2 - 3ab$ by Example \ref{example:k3_euler_ringel_form}, and it is clear that this can never be equal to 0 for $a,b \in \mathbb{Z}$, this case can be ruled out.

\emph{Case 3.} There exists no Ulrich bundle with dimension vector $\underline{d} = (a,b)$ of the associated representation of the quiver $K_3$ such that $\chi(\underline{d}, \underline{d}) \leq 0$, but there exists an Ulrich bundle $\mathcal{E}$ such that the associated representation of the quiver $K_3$ has dimension vector $\underline{d} = (a,b)$ that satisfies $\chi(\underline{d}, \underline{d}) > 0$. Note that $\mathcal{E}$ can again be assumed to be indecomposable without loss of generality. By \cite{Kac1980}*{Lemma 1.9 (b)}, $\underline{d}$ is a real root, and by \cite{Kac1980}*{Proposition 1.6 (b)}, $\chi(\underline{d}, \underline{d}) = 1$. Finally, by \cite{Kac1980}*{Lemma 2.1 (e) and Theorem 2 (c)}, $\mathcal{E}$ is the unique indecomposable Ulrich bundle with dimension vector equal to $\underline{d}$ of the associated representation of the quiver $K_3$. The proof will now be complete if we show that there are only finitely many such $\underline{d}$, since this implies that $(\mathbb{P}^2, \mathcal{O}(d))$ is of finite Ulrich representation type.

By Remark \ref{remark:phi}, the Ulrich bundle $\mathcal{E}$ fits into a short exact sequence of the form
\[
 0 \to \mathcal{O}(d-2)^{a} \to \mathcal{O}(d-1)^{b} \to \mathcal{E} \to 0.
\]

Since $\mathcal{E}$ is an Ulrich bundle, we must have $\chi(\mathcal{E}(-2d))=0$ by Definition \ref{definition:ulrich_p2}. Twisting the above short exact sequence by $\mathcal{O}(-2d)$ and calculating Euler characteristics, we have $a \chi(\mathcal{O}(-d-2)) = b \chi(\mathcal{O}(-d-1))$. Simplifying, this gives
\[
 \frac{a}{b} = \frac{d-1}{d+1}.
\]
The equation $\chi(\underline{d}, \underline{d}) = 1$ gives
\[
 a^2 + b^2 - 3ab = 1
\]
by Example \ref{example:k3_euler_ringel_form}. It is now clear that these two equations have finitely many solutions; hence there are only finitely many possible $\underline{d}$.
\end{proof}

\begin{remark}
By \cite{CG2017}*{Proposition 3.1} and \cite{CG2017}*{Theorem 5.4}, $(\mathbb{P}^2, \mathcal{O}(1))$ and $(\mathbb{P}^2, \mathcal{O}(2))$ are of finite Ulrich representation type. For even $d \geq 3$, $(\mathbb{P}^2, \mathcal{O}(d))$ is of wild Ulrich representation type by \cite{CG2017}*{Theorem 6.1}. For odd $d \geq 3$, the same result follows by \cite{CG2017}*{Theorem 6.2} and \cite{CMR2018}*{Theorem 1}.
\end{remark}

\subsection{$\mathbb{P}^1 \times \mathbb{P}^1$}
A slightly more sophisticated example concerns the trichotomy problem for Ulrich bundles on $\mathbb{P}^1 \times \mathbb{P}^1$. Ulrich bundles on ruled surfaces on $\mathbb{P}^1$ have been recently investigated by Antonelli in \cite{Antonelli2018}.

Throughout this subsection, all sheaves and cohomologies are considered on $\mathbb{P}^1 \times \mathbb{P}^1$ without explicit mention. Also, Ulrich bundles are considered with respect to an arbitrary but fixed polarization $\mathcal{O}(H) := \mathcal{O}(a, b)$, where $a, b \geq 1$.

We restate Definition \ref{definition:ulrich} for vector bundles on $\mathbb{P}^1 \times \mathbb{P}^1$.
\begin{definition}\label{definition:ulrich_p1_times_p1}
An \emph{Ulrich bundle} is a vector bundle $\mathcal{E}$ such that $h^{*}(\mathcal{E}(-iH)) = 0$ for $i=1,2$.
\end{definition}

In this case again, as described in \cite{Maiorana2017}*{Section 6}, the tilting sheaf $T := \oplus_{i=0}^{3} {^{\vee}E_i}$ has an endormorphism algebra $A$ which is isomorphic to the opposite of the bound quiver algebra $\mathbb{C}Q_4 / J$ of the quiver
\[
 Q_4:
  \begin{tikzcd}
	&1\arrow[shift left]{dr}{b^1_1}\arrow[shift right,swap]{dr}{b^1_2} &\\
	0\arrow[shift left]{ur}{a^1_1}\arrow[shift right,swap]{ur}{a^1_2}\arrow[shift left]{dr}{a^2_1}\arrow[shift right,swap]{dr}{a^2_2} &&3\\
	&2\arrow[shift left]{ur}{b^2_1}\arrow[shift right,swap]{ur}{b^2_2}&
  \end{tikzcd}
\]
with relations given by the two-sided ideal $J = (b_i^1 a_j^1 + b_j^2 a_i^2\, |\, i,j=1,2) \subset \mathbb{C}Q_4$. Let $\mathcal{E}$ be an Ulrich bundle of rank $r$. Under the triangulated equivalence $\Phi$ of Theorem \ref{theorem:bondal}, $\mathcal{E}$ is mapped to the complex of quiver representations
\[
 \begin{tikzcd}[column sep=tiny]
	&&R\mbox{Hom}(\mathcal{O}(a+1,b),\mathcal{E})\arrow[shift left]{dr}\arrow[shift right,swap]{dr} &\\
	&R\mbox{Hom}(\mathcal{O}(a+1,b+1),\mathcal{E})\arrow[shift left]{ur}\arrow[shift right,swap]{ur}\arrow[shift left]{dr}\arrow[shift right,swap]{dr}&&R\mbox{Hom}(\mathcal{O}(a,b),\mathcal{E})\\
	&&R\mbox{Hom}(\mathcal{O}(a,b+1),\mathcal{E})\arrow[shift left]{ur}\arrow[shift right,swap]{ur}&
 \end{tikzcd}.
\]
The term on the right vanishes by Definition \ref{definition:ulrich_p1_times_p1}. The remaining terms are all concentrated in the first cohomologies by the calculations in the proof of \cite{Antonelli2018}*{Theorem 3.2}. We denote their dimensions by $\gamma$ for the left term, $\delta$ for the top term, and $\tau$ for the bottom term. It follows that we can consider $\Phi(\mathcal{E})$ to be a complex of representations of the bound quiver $(Q_4;J)$ concentrated in degree 1, or equivalently, a representation of the quiver $S_4$ with dimension vector $\underline{d} = (\gamma, \delta, \tau)$.

The first main result of this article is the following theorem.
\begin{theorem}\label{theorem:ulrich_trichotomy_for_p1_times_p1}
Ulrich trichotomy holds for $(\mathbb{P}^1 \times \mathbb{P}^1, \mathcal{O}(H))$.
\end{theorem}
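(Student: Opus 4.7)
The plan is to mirror the three-case structure of the proof of Theorem \ref{theorem:ulrich_trichotomy_for_p2}. The quiver $S_4$ is hyperbolic (Remark \ref{remark:hyperbolic_quivers}), so the results of Kac \cite{Kac1980} apply. For an indecomposable Ulrich bundle $\mathcal{E}$ on $(\mathbb{P}^1 \times \mathbb{P}^1, \mathcal{O}(a,b))$, the associated dimension vector $\underline{d} = (\gamma, \delta, \tau)$ satisfies, by Example \ref{example:s4_euler_ringel_form},
\[
 \chi(\underline{d}, \underline{d}) = \gamma^2 + \delta^2 + \tau^2 - 2\gamma\delta - 2\gamma\tau.
\]
If no Ulrich bundle exists, $(\mathbb{P}^1 \times \mathbb{P}^1, \mathcal{O}(a,b))$ is trivially of finite Ulrich type; otherwise we split into three cases according to the sign of $\chi(\underline{d}, \underline{d})$.

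In the case $\chi(\underline{d}, \underline{d}) < 0$, the argument of Case 1 of the $\mathbb{P}^2$ proof transfers essentially verbatim: \cite{Kac1980}*{Proposition 1.6 (a), Lemma 2.1 (e), Lemma 2.7} together with \cite{Schofield1992}*{Theorem 3.7} supply Schur roots $n \underline{d}$ for every $n \geq 1$, whence the general element of $\mbox{Rep}(n\underline{d})$ corresponds via $\Phi$ to a simple indecomposable Ulrich bundle, and the moduli dimension $1 - n^2 \chi(\underline{d}, \underline{d})$ tends to infinity, giving wild Ulrich type. In the case $\chi(\underline{d}, \underline{d}) > 0$, the same chain of references from the $\mathbb{P}^2$ proof forces $\chi(\underline{d}, \underline{d}) = 1$ and the uniqueness of the indecomposable Ulrich bundle with dimension vector $\underline{d}$. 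Applying Remark \ref{remark:phi} to the left dual $^{\vee}\mathfrak{E}$ of Example \ref{example:p1_times_p1_exceptional_collections} presents $\mathcal{E}$ as an iterated extension built from shifts of $\mathcal{O}(a-1, b-1)$, $\mathcal{O}(a-1, b)$ and $\mathcal{O}(a, b-1)$; twisting by $\mathcal{O}(-H)$ and $\mathcal{O}(-2H)$ and using Theorem \ref{theorem:riemann-roch} then translates the Ulrich vanishings $\chi(\mathcal{E}(-H)) = \chi(\mathcal{E}(-2H)) = 0$ into two linear equations in $(\gamma, \delta, \tau)$ whose coefficients are polynomial in $(a, b)$. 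Intersected with the conic $\chi(\underline{d}, \underline{d}) = 1$, these yield only finitely many integer solutions, and so finite Ulrich type in this sub-case.

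The isotropic case $\chi(\underline{d}, \underline{d}) = 0$ is the real obstacle. In contrast to the $\mathbb{P}^2$ equation $\gamma^2 + \delta^2 - 3\gamma\delta = 0$, the quadratic $\gamma^2 + \delta^2 + \tau^2 - 2\gamma\delta - 2\gamma\tau = 0$ has nonzero integer solutions (for example $(k, 2k, k)$ for any $k \geq 1$), so the pure elimination argument used for $\mathbb{P}^2$ is unavailable. The plan is to again use the two linear Ulrich equations, coming from Remark \ref{remark:phi} and Theorem \ref{theorem:riemann-roch}, to cut the isotropic cone down to at most a ray. If this ray carries no nonnegative integer points, Case 2 is vacuous; otherwise the admissible dimension vectors take the form $n \underline{d}_0$ for a single primitive isotropic root $\underline{d}_0$, and by \cite{Kac1980}*{Theorem 2 (c)} the indecomposable representations of $S_4$ with dimension vector $n \underline{d}_0$ form a one-parameter family for each $n$. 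Since $\mbox{dim}(\mathbb{P}^1 \times \mathbb{P}^1) = 2$, such families fit within the tame bound of Definition \ref{definition:ulrich_representation_type}, giving tame Ulrich type. The computational heart of the argument is to solve the linear Ulrich system for every polarization $(a, b)$ and to verify, in each regime, that the representations produced by Kac's theorem actually come from genuine indecomposable Ulrich bundles rather than from spurious complexes of the derived equivalence.
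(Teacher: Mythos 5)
Your three-case skeleton matches the paper's, and your treatment of the wild case $\chi(\underline{d},\underline{d})<0$ is the paper's argument verbatim. The genuine gap is in the finiteness step for the cases $\chi(\underline{d},\underline{d})=1$ and $\chi(\underline{d},\underline{d})=0$: you propose to cut the quadric (resp.\ the isotropic cone) by ``two linear Ulrich equations'' coming from $\chi(\mathcal{E}(-H))=0$ and $\chi(\mathcal{E}(-2H))=0$, and then simply assert finiteness (resp.\ reduction to a single ray). But the first equation is vacuous: by Remark \ref{remark:phi} such an $\mathcal{E}$ has a resolution $0 \to \mathcal{O}(a-1,b-1)^{\gamma} \to \mathcal{O}(a-1,b)^{\delta}\oplus\mathcal{O}(a,b-1)^{\tau}\to\mathcal{E}\to 0$, and the twists by $\mathcal{O}(-H)$ of all three line bundles are $\mathcal{O}(-1,-1)$, $\mathcal{O}(-1,0)$, $\mathcal{O}(0,-1)$, each with Euler characteristic zero, so $\chi(\mathcal{E}(-H))=0$ holds identically and imposes no condition on $(\gamma,\delta,\tau)$. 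You are left with a single nontrivial linear equation, $\gamma ab=\delta a(b-1)+\tau b(a-1)$, and one plane intersected with the affine quadric $\chi(\underline{d},\underline{d})=1$ can have infinitely many integer points (a Pell-type situation), so neither your claimed finiteness when $\chi=1$ (let alone ``finite Ulrich type'', which you could not conclude even per rank) nor the claimed ``at most a ray'' when $\chi=0$ follows. Citing Theorem \ref{theorem:riemann-roch}, which is stated for the blowups $X_d$, for $\mathbb{P}^1\times\mathbb{P}^1$ is a smaller slip.

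The paper replaces your vacuous equation by the rank equation $\delta+\tau-\gamma=r$ read off from the same resolution, and then proves the needed finiteness for each fixed $r$ by a compactness argument: infinitely many integer solutions would accumulate at a point $[\gamma_0:\delta_0:\tau_0:0]$ on the hyperplane at infinity of $\mathbb{R}\mathbb{P}^3$, which must satisfy the homogenized system $\gamma_0=\delta_0+\tau_0$, $\gamma_0 ab=\delta_0 a(b-1)+\tau_0 b(a-1)$, $\gamma_0^2+\delta_0^2+\tau_0^2-2\gamma_0\delta_0-2\gamma_0\tau_0=0$; a short elimination gives $\delta_0\tau_0=0$ and then $\gamma_0=\tfrac{a-1}{a}\tau_0$, forcing the zero solution, a contradiction. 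This explicit computation is exactly the ``computational heart'' your proposal defers, and it is where $a,b\geq 1$ is used. With it, Case 3 gives finitely many real roots per rank (finite or tame type), and Case 2 gives finitely many isotropic Schur roots per rank, each supporting a family of dimension $1-\chi(\underline{d},\underline{d})=1\leq\dim(\mathbb{P}^1\times\mathbb{P}^1)$, hence tame type; no appeal to \cite{Kac1980}*{Theorem 2 (c)} producing ``one-parameter families for each $n$'' along a ray is needed, nor would it by itself settle the per-rank finiteness required by Definition \ref{definition:ulrich_representation_type}.
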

\begin{proof}
If there are no Ulrich bundles, $(\mathbb{P}^1 \times \mathbb{P}^1, \mathcal{O}(H))$ is of finite Ulrich representation type.

Now let us assume that there are Ulrich bundles. Recall that the quiver $S_4$ is hyperbolic by Remark \ref{remark:hyperbolic_quivers}.

\emph{Case 1.} There exists an Ulrich bundle $\mathcal{E}$, which we can assume to be indecomposable without loss of generality by Remark \ref{remark:ulrich}, such that the dimension vector $\underline{d} = (\gamma, \delta, \tau)$ of the associated representation of the quiver $S_4$ satisfies $\chi(\underline{d}, \underline{d}) < 0$. In this case, $(\mathbb{P}^1 \times \mathbb{P}^1, \mathcal{O}(H))$ is of wild Ulrich representation type. The argument follows that of Theorem \ref{theorem:ulrich_trichotomy_for_p2}, so we omit the details.

\emph{Case 2.} There exists no Ulrich bundle with dimension vector $\underline{d} = (\gamma, \delta, \tau)$ of the associated representation of the quiver $S_4$ such that $\chi(\underline{d}, \underline{d}) < 0$, but there exists an indecomposable Ulrich bundle $\mathcal{E}$ such that the associated representation of the quiver $S_4$ has dimension vector $\underline{d} = (\gamma, \delta, \tau)$ that satisfies $\chi(\underline{d}, \underline{d}) = 0$. Just as in the proof of Case 1 of Theorem \ref{theorem:ulrich_trichotomy_for_p2}, we have that by \cite{Kac1980}*{Proposition 1.6 (a)}, $\underline{d}$ is an imaginary root, and then by \cite{Kac1980}*{Lemma 2.1 (e) and Lemma 2.7}, $\underline{d}$ is a Schur root. We note that the dimension of the moduli space of these Ulrich bundles is given by
\[
 1 - \chi(\underline{d}, \underline{d}) = 1.
\]
We must now show that for a given rank $r$, there are finitely many such $\underline{d}$, and hence that $(\mathbb{P}^1 \times \mathbb{P}^1, \mathcal{O}(H))$ is of tame Ulrich representation type. Since the details are identical to the proof of Case 3 below, we omit them here.

\emph{Case 3.} There exists no Ulrich bundle with dimension vector $\underline{d} = (\gamma, \delta, \tau)$ of the associated representation of the quiver $S_4$ such that $\chi(\underline{d}, \underline{d}) \leq 0$, but there exists an Ulrich bundle $\mathcal{E}$ of rank $r$ such that the associated representation of the quiver $S_4$ has dimension vector $\underline{d} = (\gamma, \delta, \tau)$ that satisfies $\chi(\underline{d}, \underline{d}) > 0$. Note that $\mathcal{E}$ can again be assumed to be indecomposable without loss of generality. Just as in the proof of Theorem \ref{theorem:ulrich_trichotomy_for_p2}, by \cite{Kac1980}*{Proposition 1.9 (b)}, $\underline{d}$ is a real root, and by \cite{Kac1980}*{Proposition 1.6 (b)}, $\chi(\underline{d}, \underline{d}) = 1$. Again, by \cite{Kac1980}*{Lemma 2.1 (e) and Theorem 2 (c)}, $\mathcal{E}$ is the unique indecomposable Ulrich bundle with dimension vector equal to $\underline{d}$ of the associated representation of the quiver $S_4$; and the proof will be complete if we show that there are only finitely many such $\underline{d}$ for the given rank $r$.

By Remark \ref{remark:phi}, the Ulrich bundle $\mathcal{E}$ fits into a short exact sequence of the form
\[
 0 \to \mathcal{O}(a-1,b-1)^{\gamma} \to \mathcal{O}(a-1,b)^{\delta} \oplus \mathcal{O}(a,b-1)^{\tau} \to \mathcal{E} \to 0.
\]
It follows immediately that
\[
 \delta + \tau - \gamma = r.
\]

Next, by Definition \ref{definition:ulrich_p1_times_p1}, we must have $\chi(\mathcal{E}(-2H))=0$. Twisting the short exact sequence above by $\mathcal{O}(-2H)$ and calculating Euler characteristics, and simplifying, we have
\[
 \gamma ab = \delta a (b-1) + \tau b(a-1).
\]
Finally, the equation $\chi(\underline{d}, \underline{d}) = 1$ gives
\[
 \gamma^2 + \delta^2 + \tau^2 - 2\gamma \delta -2\gamma \tau = 1
\]
by Example \ref{example:s4_euler_ringel_form}. Now, if these three equations have finitely many solutions, then $(\mathbb{P}^1 \times \mathbb{P}^1, \mathcal{O}(a, b))$ is of either finite Ulrich representation type or tame Ulrich representation type.

Suppose then that these three equations have infinitely many solutions. We may consider these solutions to be in $\mathbb{R}^3$, where they form a discrete set, and hence in real projective space $\mathbb{R} \mathbb{P}^3$, with the extra coordinate given by $z$. Since these solutions form an infinite set in $\mathbb{R} \mathbb{P}^3$, and $\mathbb{R} \mathbb{P}^3$ is a compact topological space, the set of solutions must have an accumulation point, which will then necessarily satisfy $z=0$. Consider such an accumulation point $[\gamma_0 : \delta_0 : \tau_0 : 0]$. It is then clear that $\gamma_0$, $\delta_0$, and $\tau_0$ satisfy the equations
\begin{equation}\label{equation:p1_times_p1_4}
 \delta_0 + \tau_0 - \gamma_0 = 0,
\end{equation}
\begin{equation}\label{equation:p1_times_p1_3}
 \gamma_0 ab = \delta_0 a (b-1) + \tau_0 b(a-1),
\end{equation}
\begin{equation}\label{equation:p1_times_p1_1}
 \gamma_0^2 + \delta_0^2 + \tau_0^2 - 2\gamma_0 \delta_0 -2\gamma_0 \tau_0 = 0.
\end{equation}
\ref{equation:p1_times_p1_4} then gives us
\begin{equation}\label{equation:p1_times_p1_2}
 \gamma_0 = \delta_0 + \tau_0,
\end{equation}
and \ref{equation:p1_times_p1_3} gives us
\begin{equation}
 \gamma_0 = \delta_0 \frac{b-1}{b} + \tau_0 \frac{a-1}{a}.
\end{equation}

Plugging \ref{equation:p1_times_p1_2} into \ref{equation:p1_times_p1_1} gives
\begin{equation}
 (\delta_0 + \tau_0)^2 + \delta_0^2 + \tau_0^2 -2(\delta_0 + \tau_0)^2 = 0.
\end{equation}
Simplifying, we get $\delta_0 \tau_0 = 0$. If we now take $\delta_0 = 0$ without loss of generality, \ref{equation:p1_times_p1_2} gives $\gamma_0 = \tau_0$; and \ref{equation:p1_times_p1_3} gives 
\begin{equation}
 \gamma_0 = \frac{a-1}{a} \tau_0,
\end{equation}
which forces $\gamma_0 = \tau_0 = 0$, an impossibility.
\end{proof}

\subsection{The Del Pezzo Sextic $X_3$}

We come to the second main result of this article, namely that Conjecture \ref{conjecture:trichotomy} holds for $X_3$.

Throughout this subsection, all sheaves and cohomologies are considered on $X_3$ without explicit mention. Also, Ulrich bundles are considered with respect to an arbitrary but fixed polarization $\mathcal{O}(H)$ where $H = a \ell + b \ell_1 + c \ell_2 + d \ell_3$. We note that $\mathcal{O}(H)$ is both ample and very ample by Theorem \ref{theorem:ampleness}.

We restate Definition \ref{definition:ulrich} for vector bundles on $X_3$.
\begin{definition}\label{definition:ulrich_x3}
An \emph{Ulrich bundle} is a vector bundle $\mathcal{E}$ such that $h^{*}(\mathcal{E}(-iH)) = 0$ for $i=1,2$.
\end{definition}

Recall the exceptional collections $\mathfrak{E}$ and $^{\vee}\mathfrak{E}$ of Example \ref{example:x3_exceptional_collections}. Without loss of generality, we twist the elements of those exceptional collections by $\mathcal{O}(H)$. The tilting sheaf $T := \oplus_{i=0}^{5} {^{\vee}E_i}$ has an endomorphism algebra $A$ which is isomorphic to the opposite of a bound quiver algebra $\mathbb{C}K_{3,2,1} / J$ of the quiver
\[
 K_{3,2,1}:
 \begin{tikzcd}
 0
 \arrow[dr]
 \arrow[dddr] & & \\
 & 3
 \arrow[dr]\arrow[shift left, dr]\arrow[shift right, dr] & \\
 1
 \arrow[ur]
 \arrow[dr] & & 5 \\
 & 4
 \arrow[ur]\arrow[shift left, ur]\arrow[shift right, ur] & \\
 2
 \arrow[ur]
 \arrow[uuur] & &
 \end{tikzcd}.
\]
Let $\mathcal{E}$ be an Ulrich bundle of rank $r$. Under the triangulated equivalence $\Phi$ of Theorem \ref{theorem:bondal}, $\mathcal{E}$ is mapped to the complex of quiver representations
\[
 \begin{tikzcd}[column sep=-1.5em]
 R\mbox{Hom}(\mathcal{O}(H + 2 \ell - \ell_1 - \ell_2),\mathcal{E})
 \arrow[dr]
 \arrow[dddr] & & \\
 & R\mbox{Hom}(\mathcal{O}(H + \ell),\mathcal{E})
 \arrow[dr]\arrow[shift left, dr]\arrow[shift right, dr] & \\
 R\mbox{Hom}(\mathcal{O}(H + 2 \ell - \ell_1 - \ell_3),\mathcal{E})
 \arrow[ur]
 \arrow[dr] & & R\mbox{Hom}(\mathcal{O}(H),\mathcal{E}) \\
 & R\mbox{Hom}(\mathcal{O}(H + 2 \ell - \ell_1 - \ell_2 - \ell_3),\mathcal{E})
 \arrow[ur]\arrow[shift left, ur]\arrow[shift right, ur] & \\
 R\mbox{Hom}(\mathcal{O}(H + 2 \ell - \ell_2 - \ell_3),\mathcal{E})
 \arrow[ur]
 \arrow[uuur] & &
 \end{tikzcd}.
\]
The term on the right vanishes by Definition \ref{definition:ulrich_x3}.

\begin{proposition}
The left and middle terms in the above quiver are all concentrated in the first cohomologies.
\end{proposition}
\begin{proof}
Let $D$ be one of the divisors $\ell$, $2 \ell - \ell_1 - \ell_2 - \ell_3$, $2 \ell - \ell_1 - \ell_2$, $2 \ell - \ell_1 - \ell_3$, $2 \ell - \ell_2 - \ell_3$. Note that $D$ is globally generated by \cite{DiRocco1996}*{Corollary 4.6}, hence effective. Since $\mbox{h}^0(\mathcal{E}(-H))=0$ by Definition \ref{definition:ulrich_x3}, we have $\mbox{h}^0(\mathcal{E}(-H-D))=0$.

We now note that
\[
 \mbox{h}^2(\mathcal{E}(-H-D)) = \mbox{h}^0(\mathcal{E}^{\vee}(K+H+D)) = \mbox{h}^0(\mathcal{E}^{\vee}(3H+K)(-H)(-H+D)).
\]
By Lemma \ref{lemma:ulrich_duality}, $\mathcal{E}^{\vee}(3H+K)$ is an Ulrich bundle and hence $\mbox{h}^0(\mathcal{E}^{\vee}(3H+K)(-H))=0$. The proof will be complete once we show that $H-D$ is an effective divisor. We do this for $D = \ell$; the proof for the other cases is similar.

By Theorem \ref{theorem:ampleness}, $b \leq -1$, $c \leq -1$, $d \leq -1$, $a+b+c \geq 1$, $a+b+d \geq 1$, and $a+c+d \geq 1$. Then, $H - \ell = (a-1) \ell + b \ell_1 + c \ell_2 + d \ell_3$, and we have the inequalities $b \leq 0$, $c \leq 0$, $d \leq 0$, $(a-1)+b+c \geq 0$, $(a-1)+b+d \geq 0$, and $(a-1)+c+d \geq 0$. It follows by \cite{DiRocco1996}*{Corollary 4.6} that $H - \ell$ is globally generated, hence effective.
\end{proof}

Let us denote the dimensions of the remaining terms by $\alpha$, $\beta$, $\gamma$ for the left terms (top to bottom), and by $\delta$, $\epsilon$ for the middle terms (top to bottom). We therefore have a single representation of a bound quiver $(K_{3,2,1};J)$ with dimension vector $\underline{d} = (\alpha, \beta, \gamma, \delta, \epsilon, 0)$, which we consider as a representation of the quiver $K_{3,2}$ with dimension vector $\underline{d} = (\alpha, \beta, \gamma, \delta, \epsilon)$.

The second main result of this article is the following theorem.
\begin{theorem}\label{theorem:ulrich_trichotomy_for_x3}
Ulrich trichotomy holds for $X_3$.
\end{theorem}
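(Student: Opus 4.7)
The plan is to adapt the three-case split of Theorem \ref{theorem:ulrich_trichotomy_for_p1_times_p1}, now using that the quiver $K_{3,2}$ is hyperbolic (Remark \ref{remark:hyperbolic_quivers}). If there are no Ulrich bundles, $X_3$ is of finite Ulrich representation type, so assume one exists. For an indecomposable Ulrich bundle $\mathcal{E}$ with associated dimension vector $\underline{d} = (\alpha, \beta, \gamma, \delta, \epsilon)$, I would split according to the sign of $\chi(\underline{d}, \underline{d})$.

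Case 1 ($\chi(\underline{d},\underline{d}) < 0$) goes through verbatim as in the $\mathbb{P}^1 \times \mathbb{P}^1$ argument: Kac's Proposition 1.6(a) together with Lemmas 2.1(e) and 2.7 make $\underline{d}$ an imaginary Schur root, Schofield's theorem promotes each $n\underline{d}$ to a Schur root, and openness of the Ulrich condition produces families of simple Ulrich bundles with moduli dimension $1 - n^2 \chi(\underline{d},\underline{d}) \to \infty$, witnessing wild Ulrich type.

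For Cases 2 ($\chi = 0$) and 3 ($\chi = 1$), Kac identifies $\underline{d}$ as an imaginary or real Schur root, with the indecomposable Ulrich bundle parametrised by a one-dimensional moduli space (Case 2) or unique (Case 3); it remains to show the set of admissible $\underline{d}$ is finite, per fixed rank in Case 2 and overall in Case 3. Via Remark \ref{remark:phi} and a Beilinson-type resolution of $\mathcal{E}$ associated to the exceptional collection of Example \ref{example:x3_exceptional_collections}, I would express $\mathcal{E}$ as the cohomology of a short complex built from direct sums of the underlying sheaves of the objects $E_i$, with multiplicities encoded by the components of $\underline{d}$. Reading off ranks produces a first linear equation in $\underline{d}$ equating it to $r$. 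Tensoring with $\mathcal{O}(-2H)$ and taking Euler characteristics via Theorem \ref{theorem:riemann-roch} turns the Ulrich vanishing $\chi(\mathcal{E}(-2H)) = 0$ into a second linear equation with coefficients polynomial in the polarization coordinates of $H = a\ell + b\ell_1 + c\ell_2 + d\ell_3$. Example \ref{example:k32_euler_ringel_form} gives the quadratic
\[
 \alpha^2 + \beta^2 + \gamma^2 + \delta^2 + \epsilon^2 - (\alpha + \beta + \gamma)(\delta + \epsilon) = \eta, \qquad \eta \in \{0,1\}.
\]

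The main obstacle is the finiteness argument: with five unknowns and only three equations, the direct projective-compactification trick of Theorem \ref{theorem:ulrich_trichotomy_for_p1_times_p1} does not immediately suffice. The structural observation I would exploit is that the symmetric Euler form on $K_{3,2}$ has signature $(4, 1)$ on $\mathbb{R}^5$ (its eigenvalues being $1 \pm \sqrt{3/2}$ and $1$ with multiplicity three), so the quadric $\chi(\underline{d}, \underline{d}) = \eta$ restricts to a bounded ellipsoid on any three-dimensional affine slice avoiding the unique negative direction. The critical computational step is then to verify, using the ampleness constraints provided by Theorem \ref{theorem:ampleness}, that the two linear equations above cut out precisely such a positive-definite slice by checking that their form-gradients span a two-dimensional subspace of signature $(1,1)$. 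Once established, the integer points on the resulting ellipsoid are finite, settling both Case 2 (tame) and Case 3 (finite). Should positive-definiteness fail for certain degenerate polarizations, the fallback is an accumulation-at-infinity argument in $\mathbb{RP}^5$, homogenising the three equations and ruling out non-trivial non-negative solutions at $z = 0$ by direct case analysis on the polarization.
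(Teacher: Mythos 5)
Your plan is essentially the paper's proof: the same three-way split on the sign of $\chi(\underline{d},\underline{d})$, the same appeal to Kac (imaginary/real Schur roots, uniqueness for real roots) and Schofield, and the same three Diophantine constraints (rank, $\chi(\mathcal{E}(-2H))=0$, and $\chi(\underline{d},\underline{d})=\eta$). Your final step is also equivalent to the paper's, just phrased differently: the paper homogenizes, passes to $\mathbb{R}\mathbb{P}^5$, and after explicit linear changes of variables reduces to showing a hyperplane meets the Lorentz cone $-(\alpha_0'')^2+(\beta_0'')^2+(\gamma_0'')^2+(\delta_0'')^2=0$ only at the origin via a $45$\textdegree{} angle estimate; that is precisely your statement that the symmetrized Euler form (signature $(4,1)$, as you computed) is positive definite on the $3$-dimensional slice cut out by the two linear equations, so that the level sets $\chi=\eta$ on the affine slice are bounded ellipsoids with finitely many lattice points. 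The "critical computational step" you leave open does succeed: carried out as in the paper, the definiteness condition reduces exactly to $a^2-b^2-c^2-d^2>0$, i.e. $H^2>0$, which ampleness guarantees; so your fallback (which is literally the paper's accumulation-at-infinity argument) is not needed as a separate case.

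The one substantive point you gloss over is the input that makes the dimension vector and the two-term resolution available at all: you must first show that for an Ulrich bundle $\mathcal{E}$ the terms $R\mbox{Hom}({}^{\vee}E_i(H),\mathcal{E})$ at the five non-terminal vertices are concentrated in cohomological degree $1$ (the term at the last vertex vanishing by the Ulrich condition). The paper proves this as a separate proposition: $\mbox{h}^0(\mathcal{E}(-H-D))=0$ because $D$ is effective, and $\mbox{h}^2(\mathcal{E}(-H-D))=0$ by Serre duality together with Lemma \ref{lemma:ulrich_duality} and the effectivity of $H-D$, checked for each relevant $D$ via Di Rocco's criterion and the ampleness inequalities of Theorem \ref{theorem:ampleness}. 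Without this, $\Phi(\mathcal{E})$ is only a complex of representations, Kac's machinery does not yet apply, and the rank and Euler-characteristic equations you "read off" are not justified. This is a provable but genuinely needed lemma, and your proposal should include it (for $\mathbb{P}^1\times\mathbb{P}^1$ the analogous vanishing was imported from Antonelli, but for $X_3$ it has to be proved).
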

\begin{proof}
If there are no Ulrich bundles, $(X_3, \mathcal{O}(H))$ is of finite Ulrich representation type.

Now let us assume that there are Ulrich bundles. Recall that the quiver $K_{3,2}$ is hyperbolic by Remark \ref{remark:hyperbolic_quivers}.

\emph{Case 1.} There exists an Ulrich bundle $\mathcal{E}$, which we assume to be indecomposable without loss of generality by Remark \ref{remark:ulrich}, such that the dimension vector $\underline{d} = (\alpha, \beta, \gamma, \delta, \epsilon)$ of the associated representation of the quiver $K_{3,2}$ satisfies $\chi(\underline{d}, \underline{d}) < 0$. In this case, $(X_3, \mathcal{O}(H))$ is of wild Ulrich representation type. The argument follows that of Theorem \ref{theorem:ulrich_trichotomy_for_p2}, so we omit the details.

\emph{Case 2.} There exists no Ulrich bundle with dimension vector $\underline{d} = (\alpha, \beta, \gamma, \delta, \epsilon)$ of the associated representation of the quiver $K_{3,2}$ such that $\chi(\underline{d}, \underline{d}) < 0$, but there exists an indecomposable Ulrich bundle $\mathcal{E}$ such that the associated representation of the quiver $K_{3,2}$ has dimension vector $\underline{d} = (\alpha, \beta, \gamma, \delta, \epsilon)$ that satisfies $\chi(\underline{d}, \underline{d}) = 0$. Just as in the proof of Case 1 of Theorem \ref{theorem:ulrich_trichotomy_for_p2}, we have that by \cite{Kac1980}*{Proposition 1.6 (a)}, $\underline{d}$ is an imaginary root, and then by \cite{Kac1980}*{Lemma 2.1 (e)} and \cite{Kac1980}*{Lemma 2.7}, $\underline{d}$ is a Schur root. The dimension of the moduli space of these Ulrich bundles is given by
\[
 1 - \chi(\underline{d}, \underline{d}) = 1.
\]
Again as in the proof of Theorem \ref{theorem:ulrich_trichotomy_for_p1_times_p1}, we must show that for any given rank $r$, there are finitely many such $\underline{d}$. In this case, $(X_3, \mathcal{O}(H))$ is of tame Ulrich representation type. The details are given in the proof of Case 3 below.

\emph{Case 3.} There exists no Ulrich bundle with dimension vector $\underline{d} = (\alpha, \beta, \gamma, \delta, \epsilon)$ of the associated representation of the quiver $K_{3,2}$ such that $\chi(\underline{d}, \underline{d}) \leq 0$, but there exists an Ulrich bundle $\mathcal{E}$ of rank $r$ such that the associated representation of the quiver $K_{3,2}$ has dimension vector $\underline{d} = (\alpha, \beta, \gamma, \delta, \epsilon)$ that satisfies $\chi(\underline{d}, \underline{d}) > 0$. Note that $\mathcal{E}$ can again be assumed to be indecomposable without loss of generality. Just as in the proof of Theorem \ref{theorem:ulrich_trichotomy_for_p2}, by \cite{Kac1980}*{Proposition 1.9 (b)}, $\underline{d}$ is a real root, and by \cite{Kac1980}*{Proposition 1.6 (b)}, $\chi(\underline{d}, \underline{d}) = 1$. Again, by \cite{Kac1980}*{Lemma 2.1 (e)} and \cite{Kac1980}*{Theorem 2 (c)}, $\mathcal{E}$ is the unique indecomposable Ulrich bundle with dimension vector equal to $\underline{d}$ of the associated representation of the quiver $K_{3,2}$; and the proof will be complete if we show that there are only finitely many such $\underline{d}$ for the given rank $r$.

By Remark \ref{remark:phi}, the Ulrich bundle $\mathcal{E}$ fits into a short exact sequence of the form
\[
 0 \to \mathcal{O}(\ell_1 - \ell+H)^{\alpha} \oplus \mathcal{O}(\ell_2 - \ell+H)^{\beta} \oplus \mathcal{O}(\ell_3 - \ell+H)^{\gamma} \to \mathcal{P}(H)^{\delta} \oplus \mathcal{K}(H)^{\epsilon} \to \mathcal{E} \to 0.
\]
It follows that
\begin{equation}\label{equation:x3_1}
 2 (\delta + \epsilon) - \alpha - \beta - \gamma = r.
\end{equation}
Next, by Definition \ref{definition:ulrich_x3}, we must have $\chi(\mathcal{E}(-2H))=0$. Twisting the short exact sequence above by $\mathcal{O}(-2H)$ and calculating Euler characteristics, we have
\begin{equation}\label{equation:x3_2}
 A \alpha + B \beta + C \gamma = D \delta + E \epsilon,
\end{equation}
where
\begin{align*}
 A = \chi(\mathcal{O}(\ell_1 - \ell - H)) &= \frac{1}{2}(a^2-a+b-b^2-c-c^2-d-d^2), \\
 B = \chi(\mathcal{O}(\ell_2 - \ell - H)) &= \frac{1}{2}(a^2-a-b-b^2+c-c^2-d-d^2), \\
 C = \chi(\mathcal{O}(\ell_3 - \ell - H)) &= \frac{1}{2}(a^2-a-b-b^2-c-c^2+d-d^2), \\
 D = \chi(\mathcal{P} (-H)) &= -2a + a^2 -b - b^2 - c - c^2 -d - d^2, \\
 E = \chi(\mathcal{K} (-H)) &= -a + a^2 - b^2 - c^2 - d^2.
\end{align*}
Finally, the equation $\chi(\underline{d}, \underline{d}) = 1$ gives
\begin{equation}\label{equation:x3_3}
 \alpha^2 + \beta^2 + \gamma^2 + \delta^2 + \epsilon^2 - \alpha \delta - \alpha \epsilon - \beta \delta - \beta \epsilon - \gamma \delta - \gamma \epsilon = 1
\end{equation}
by Example \ref{example:k32_euler_ringel_form}. Now, if these three equations have finitely many solutions, then $(X_3, \mathcal{O}(H))$ is of either finite Ulrich representation type or tame Ulrich representation type.

Suppose then that these three equations have infinitely many solutions. We may consider these solutions to be in $\mathbb{R}^5$, and hence in real projective space $\mathbb{R} \mathbb{P}^5$, with the extra coordinate given by $z$. Since these solutions form an infinite set in $\mathbb{R} \mathbb{P}^5$, and $\mathbb{R} \mathbb{P}^5$ is a compact topological space, the set of solutions must have an accumulation point, which will then necessarily satisfy $z=0$. Consider such an accumulation point $[\alpha_0 : \beta_0 : \gamma_0 : \delta_0 : \tau_0 : 0]$. It is then clear that $\alpha_0$, $\beta_0$, $\gamma_0$, $\delta_0$, and $\tau_0$ satisfy the equations
\begin{equation}\label{equation:x3_4}
 2 (\delta_0 + \epsilon_0) = \alpha_0 + \beta_0 + \gamma_0,
\end{equation}
\begin{equation}\label{equation:x3_5}
 \alpha_0^2 + \beta_0^2 + \gamma_0^2 + \delta_0^2 + \epsilon_0^2 - \alpha_0 \delta_0 - \alpha_0 \epsilon_0 - \beta_0 \delta_0 - \beta_0 \epsilon_0 - \gamma_0 \delta_0 - \gamma_0 \epsilon_0 = 0,
\end{equation}
and using equations \ref{equation:x3_2} and \ref{equation:x3_4}, the equation
\begin{equation}\label{equation:x3_6}
 (b - c - d) \alpha_0 + (-b + c - d) \beta_0 + (-b - c  + d) \gamma_0 + 2 (a+b+c+d) \delta_0 = 0
\end{equation}
as well. Using equation \ref{equation:x3_4}, equation \ref{equation:x3_5} can be written in the form
\begin{equation}\label{equation:x3_7}
 \alpha_0^2 + \beta_0^2 + \gamma_0^2 + \delta_0^2 + \epsilon_0^2 - 2 (\delta_0 + \epsilon_0)^2 = 0.
\end{equation}
If we now make the transformations
\begin{align*}
 \alpha_0' &= \alpha_0 \\
 \beta_0' &= \beta_0 \\
 \gamma_0' &= \gamma_0 \\
 \delta_0' &= \frac{\delta_0 - \epsilon_0}{\sqrt{2}} \\
 \epsilon_0' &= \sqrt{\frac{3}{2}} (\delta_0 + \epsilon_0),
\end{align*}
equation \ref{equation:x3_7} takes the form
\begin{equation}\label{equation:x3_9}
 (\alpha_0')^2 + (\beta_0')^2 + (\gamma_0')^2 + (\delta_0')^2 - (\epsilon_0')^2 = 0,
\end{equation}
and equation \ref{equation:x3_6} takes the form
\begin{equation}\label{equation:x3_10}
 (b - c - d) \alpha_0' + (-b + c - d) \beta_0' + (-b - c  + d) \gamma_0' + (a+b+c+d) (\sqrt{2} \delta_0' + \sqrt{\frac{2}{3}} \epsilon_0') = 0.
\end{equation}
We use equation \ref{equation:x3_4} to eliminate $\epsilon_0'$, which gives
\begin{equation}\label{equation:x3_11}
 (a+3b-c-d) \alpha_0' + (a-b+3c-d) \beta_0' + (a-b-c+3d) \gamma_0' + 2 \sqrt{2} (a+b+c+d) \delta_0' = 0.
\end{equation}
Using equation \ref{equation:x3_4}, equation \ref{equation:x3_9} can be further simplified to
\begin{equation}\label{equation:x3_12}
 (\alpha_0')^2 + (\beta_0')^2 + (\gamma_0')^2 + (\delta_0')^2 - \frac{3}{8} (\alpha_0' + \beta_0' + \gamma_0')^2 = 0.
\end{equation}

We make another set of transformations
\begin{align*}
 \alpha_0'' &= \frac{1}{\sqrt{3}} (\alpha_0' + \beta_0' + \gamma_0') \\
 \beta_0'' &= 2 (- \alpha_0' + \beta_0') \\
 \gamma_0'' &= \frac{2}{\sqrt{3}} (- \alpha_0' - \beta_0' + 2 \gamma_0') \\
 \delta_0'' &= 2\sqrt{2} \delta_0'
\end{align*}
to convert equation \ref{equation:x3_12} to
\begin{equation}\label{equation:x3_13}
 - (\alpha_0'')^2 + (\beta_0'')^2 + (\gamma_0'')^2 + (\delta_0'')^2 = 0.
\end{equation}
After this set of transformations, equation \ref{equation:x3_11} takes the form
\begin{equation}\label{equation:x3_14}
 \frac{3a+b+c+d}{\sqrt{3}} \alpha_0'' + (-b+c) \beta_0'' + \frac{-b-c+2d}{\sqrt{3}} \gamma_0'' + (a+b+c+d) \delta_0'' = 0.
\end{equation}

The proof will now be complete once we show that equations \ref{equation:x3_13} and \ref{equation:x3_14} have the unique solution $(0,0,0,0)$.

Equation \ref{equation:x3_13} describes a cone in $\mathbb{R}^4$ with the $\alpha_0''$-axis as its axis. Equation \ref{equation:x3_14} describes a hyperplane in $\mathbb{R}^4$ with normal vector
\begin{equation*}
 \mathbf{n} = (p,q,r,s) = (\frac{3a+b+c+d}{\sqrt{3}}, -b+c, \frac{-b-c+2d}{\sqrt{3}}, a+b+c+d).
\end{equation*}
It is enough to show that the angle $\theta$ between $\mathbf{n}$ and the unit vector $\mathbf{u}$ in the direction of the $\alpha_0''$-axis is smaller than 45\textdegree{}, which is equivalent to
\[
 \abs{\cos \theta} = \frac{\abs{\mathbf{n} \boldsymbol{\cdot} \mathbf{u}}}{\abs{\mathbf{n}}} > \frac{1}{\sqrt{2}},
\]
and hence, to
\begin{equation*}
 p^2 > q^2 + r^2 + s^2 .
\end{equation*}
Substituting the expressions for $p$,$q$, $r$, and $s$ into this inequality, we get the new inequality
\begin{equation*}
 a^2 - b^2 - c^2 - d^2 > 0.
\end{equation*}
Since $H$ is ample, this inequality is satisfied.
\end{proof}

\subsection{The Del Pezzo Quintic $X_4$}

We come to the third main result of this article, namely that Conjecture \ref{conjecture:trichotomy} holds for $X_4$.

Throughout this subsection, all sheaves and cohomologies are considered on $X_4$ without explicit mention. Also, Ulrich bundles are considered with respect to an arbitrary but fixed polarization $\mathcal{O}(H)$ where $H = a \ell + b \ell_1 + c \ell_2 + d \ell_3 + e \ell_4$. We note that $\mathcal{O}(H)$ is both ample and very ample by Theorem \ref{theorem:ampleness}.

We restate Definition \ref{definition:ulrich} for vector bundles on $X_4$.
\begin{definition}\label{definition:ulrich_x4}
An \emph{Ulrich bundle} is a vector bundle $\mathcal{E}$ such that $h^{*}(\mathcal{E}(-iH)) = 0$ for $i=1,2$.
\end{definition}

Recall the exceptional collections $\mathfrak{E}$ and $^{\vee}\mathfrak{E}$ of Example \ref{example:x4_exceptional_collections}. Without loss of generality, we twist the elements of those exceptional collections by $\mathcal{O}(H)$. The tilting sheaf $T := \oplus_{i=0}^{6} {^{\vee}E_i}$ has an endomorphism algebra $A$ which is isomorphic to the opposite of a bound quiver algebra $\mathbb{C}K_{5,1,1} / J$ of the quiver
\[
 K_{5,1,1}:
 \begin{tikzcd}
 0\arrow[ddr] \\
 1\arrow[dr] \\
 2\arrow[r] & 5\arrow[r]\arrow[shift left=1.5, r]\arrow[shift right=1.5, r]\arrow[shift left=3, r]\arrow[shift right=3, r] & 6 \\
 3\arrow[ur] \\
 4\arrow[uur]
 \end{tikzcd}.
\]
Let $\mathcal{E}$ be an Ulrich bundle of rank $r$. Under the triangulated equivalence $\Phi$ of Theorem \ref{theorem:bondal}, $\mathcal{E}$ is mapped to the complex of quiver representations
\[
 \begin{tikzcd}[column sep=-1.8em]
 & R\mbox{Hom}(\mathcal{O}(H+2\ell - \ell_1 - \ell_2 - \ell_3),\mathcal{E})\arrow[dd] & \\
 R\mbox{Hom}(\mathcal{O}(H+2\ell - \ell_1 - \ell_2 - \ell_4),\mathcal{E})\arrow[dr] & & \\
 R\mbox{Hom}(\mathcal{O}(H+2\ell - \ell_1 - \ell_3 - \ell_4),\mathcal{E})\arrow[r] & R\mbox{Hom}(\mathcal{F}(H),\mathcal{E})\arrow[r]\arrow[shift left=1.5, r]\arrow[shift right=1.5, r]\arrow[shift left=3, r]\arrow[shift right=3, r] & R\mbox{Hom}(\mathcal{O}(H),\mathcal{E}) \\
 R\mbox{Hom}(\mathcal{O}(H+2\ell - \ell_2 - \ell_3 - \ell_4),\mathcal{E})\arrow[ur] & & \\
 & R\mbox{Hom}(\mathcal{O}(H+\ell),\mathcal{E})\arrow[uu] &
 \end{tikzcd}.
\]
The term on the right vanishes by Definition \ref{definition:ulrich_x4}.

\begin{proposition}
The left and middle terms in the above quiver are all concentrated in the first cohomologies.
\end{proposition}
\begin{proof}
Let $D$ be one of the divisors $\ell$, $2\ell - \ell_2 - \ell_3 - \ell_4$, $2\ell - \ell_1 - \ell_3 - \ell_4$, $2\ell - \ell_1 - \ell_2 - \ell_4$, $2\ell - \ell_1 - \ell_2 - \ell_3$, and finally $2\ell-\ell_1-\ell_2-\ell_3-\ell_4$. (The last one will be needed below.) Note that $D$ is globally generated by \cite{DiRocco1996}*{Corollary 4.6}, hence effective. Since $\mbox{h}^0(\mathcal{E}(-H))=0$ by Definition \ref{definition:ulrich_x4}, we have $\mbox{h}^0(\mathcal{E}(-H-D))=0$.

We now note that
\[
 \mbox{h}^2(\mathcal{E}(-H-D)) = \mbox{h}^0(\mathcal{E}^{\vee}(K+H+D)) = \mbox{h}^0(\mathcal{E}^{\vee}(3H+K)(-H)(-H+D)).
\]
By Lemma \ref{lemma:ulrich_duality}, $\mathcal{E}^{\vee}(3H+K)$ is an Ulrich bundle and hence $\mbox{h}^0(\mathcal{E}^{\vee}(3H+K)(-H))=0$. Once we show that $H-D$ is an effective divisor, we shall obtain $\mbox{h}^2(\mathcal{E}(-H-D))=0$. We do this for $D = \ell$; the proof for the other cases is similar.

By Theorem \ref{theorem:ampleness}, we have $b \leq -1$, $c \leq -1$, $d \leq -1$, $e \leq -1$, $a+b+c \geq 1$, $a+b+d \geq 1$, $a+b+e \geq 1$, $a+c+d \geq 1$, $a+c+e \geq 1$, $a+d+e \geq 1$. Then, $H - \ell = (a-1) \ell + b \ell_1 + c \ell_2 + d \ell_3 + e \ell_4$, and we have the inequalities $b \leq 0$, $c \leq 0$, $d \leq 0$, $e \leq 0$, $(a-1)+b+c \geq 0$, $(a-1)+b+d \geq 0$, $(a-1)+b+e \geq 0$, $(a-1)+c+d \geq 0$, $(a-1)+c+e \geq 0$, $(a-1)+d+e \geq 0$. It follows by \cite{DiRocco1996}*{Corollary 4.6} that $H - \ell$ is globally generated, hence effective.

It now remains to prove that $R\mbox{Hom}(\mathcal{F}(H),\mathcal{E})$ is concentrated in the first cohomology. Consider the short exact sequence
\[
 0 \to \mathcal{O}(-\ell) \to \mathcal{F}^{\vee} \to \mathcal{O}(-2\ell + \ell_1 + \ell_2 + \ell_3 + \ell_4) \to 0,
\]
and twist it by $\mathcal{E}(-H)$ to obtain
\[
 0 \to \mathcal{E}(-H-\ell) \to \mathcal{E}(-H) \otimes \mathcal{F}^{\vee} \to \mathcal{O}(-H-2\ell + \ell_1 + \ell_2 + \ell_3 + \ell_4) \to 0.
\]
Only the first cohomologies of the left and the right terms are possibly nonzero, as was shown above. Hence the same holds for $\mathcal{E}(-H) \otimes \mathcal{F}^{\vee}$.
\end{proof}

Let us denote the dimensions of the remaining terms by $\alpha$, $\beta$, $\gamma$, $\delta$, $\epsilon$ for the terms on the top, on the left (top to bottom), and on the bottom, respectively. Let $\zeta$ denote the dimension of the term in the center. We therefore have a single representation of a bound quiver $(K_{5,1,1};J)$ with dimension vector $\underline{d} = (\alpha, \beta, \gamma, \delta, \epsilon, \zeta, 0)$, which we consider as a representation of the quiver $K_{5,1}$ with dimension vector $\underline{d} = (\alpha, \beta, \gamma, \delta, \epsilon, \zeta)$.

The third main result of this article is the following theorem.
\begin{theorem}\label{theorem:ulrich_trichotomy_for_x4}
Ulrich trichotomy holds for $X_4$.
\end{theorem}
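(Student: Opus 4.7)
The plan is to follow the three-case framework established in Theorems \ref{theorem:ulrich_trichotomy_for_p2}, \ref{theorem:ulrich_trichotomy_for_p1_times_p1}, and \ref{theorem:ulrich_trichotomy_for_x3}. If no Ulrich bundle exists, $(X_4, \mathcal{O}(H))$ is of finite type. Otherwise, since $K_{5,1}$ is hyperbolic by Remark \ref{remark:hyperbolic_quivers}, the results of \cite{Kac1980} apply to the dimension vector $\underline{d} = (\alpha, \beta, \gamma, \delta, \epsilon, \zeta)$ of the associated representation of any indecomposable Ulrich bundle. The case $\chi(\underline{d}, \underline{d}) < 0$ yields wild type by the Schur root argument of Theorem \ref{theorem:ulrich_trichotomy_for_p2}; the case $\chi = 0$ will give tame type with moduli of dimension $1-\chi = 1 \leq \dim X_4$; and the case $\chi > 0$ forces $\chi = 1$, $\underline{d}$ to be a real root, and uniqueness of the associated Ulrich bundle. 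As in the $X_3$ proof, the only real work lies in showing that for each fixed rank $r$ only finitely many $\underline{d}$ arise, an argument that simultaneously justifies Cases 2 and 3.

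To carry this out, apply Remark \ref{remark:phi} to the twisted exceptional collection of Example \ref{example:x4_exceptional_collections}: an indecomposable Ulrich bundle $\mathcal{E}$ of rank $r$ fits into a short exact sequence
\[
 0 \to \mathcal{O}(H+\ell_4-\ell)^{\alpha} \oplus \mathcal{O}(H+\ell_3-\ell)^{\beta} \oplus \mathcal{O}(H+\ell_2-\ell)^{\gamma} \oplus \mathcal{O}(H+\ell_1-\ell)^{\delta} \oplus \mathcal{O}(H-2\ell+\ell_1+\ell_2+\ell_3+\ell_4)^{\epsilon} \to \mathcal{K}(H)^{\zeta} \to \mathcal{E} \to 0.
\]
Since $\mbox{rank}(\mathcal{K}) = 3$, the rank equation reads $3\zeta - \alpha - \beta - \gamma - \delta - \epsilon = r$. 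Twisting by $\mathcal{O}(-2H)$ and using $\chi(\mathcal{E}(-2H)) = 0$ from Definition \ref{definition:ulrich_x4} yields a second linear equation $A\alpha + B\beta + C\gamma + D\delta + E'\epsilon = \chi(\mathcal{K}(-H))\zeta$, with all coefficients computable via Theorem \ref{theorem:riemann-roch} and the defining sequences of $\mathcal{K}$ and $\mathcal{F}$. Finally, $\chi(\underline{d}, \underline{d}) = 1$ becomes, by Example \ref{example:k51_euler_ringel_form}, the quadratic
\[
 \alpha^2 + \beta^2 + \gamma^2 + \delta^2 + \epsilon^2 + \zeta^2 - (\alpha + \beta + \gamma + \delta + \epsilon)\zeta = 1.
\]

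Assume these three equations have infinitely many integer solutions. Pass to $\mathbb{R}\mathbb{P}^6$ with an extra coordinate $z$ and extract an accumulation point at $z=0$; the homogenizations then have a nontrivial solution $(\alpha_0, \beta_0, \gamma_0, \delta_0, \epsilon_0, \zeta_0)$. Using the homogenized rank equation $\zeta_0 = \tfrac{1}{3}(\alpha_0 + \beta_0 + \gamma_0 + \delta_0 + \epsilon_0)$ to eliminate $\zeta_0$, the quadratic collapses to $\sum_{i=1}^{5} x_i^2 - \tfrac{2}{9}(\sum x_i)^2 = 0$ in five variables, whose matrix is proportional to $9I - 2J$ with eigenvalue $-1$ along the all-ones direction and eigenvalue $9$ on its $4$-dimensional orthogonal complement, hence has signature $(4,1)$. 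An orthogonal change of variables converts this to the standard cone $y_1^2 + y_2^2 + y_3^2 + y_4^2 - y_5^2 = 0$ with axis $y_5$ along $(1,1,1,1,1)$, while the homogenized Euler-characteristic equation becomes a hyperplane through the origin. The cone and hyperplane meet only at the origin precisely when the $y_5$-coefficient $p$ of the hyperplane satisfies $p^2 > q_1^2 + q_2^2 + q_3^2 + q_4^2$, where $q_i$ are the remaining coefficients. By direct analogy with Theorem \ref{theorem:ulrich_trichotomy_for_x3}, after expansion and cancellation this should reduce to the single inequality $a^2 > b^2 + c^2 + d^2 + e^2$, i.e., $H^2 > 0$, which holds by the ampleness criterion of Theorem \ref{theorem:ampleness}. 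The main obstacle is precisely this final bookkeeping: the five coefficients $A, B, C, D, E'$ have mixed symmetric and asymmetric dependence on $(b,c,d,e)$, and they must combine — under two successive orthogonal changes of coordinates, as in $X_3$ — into a clean ampleness inequality; the structure is entirely parallel to the $X_3$ argument but with one additional variable, so the computation is more intricate but not conceptually new.
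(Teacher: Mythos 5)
Your outline is exactly the paper's argument: reduce to the quiver $K_{5,1}$ via the twisted collection of Example \ref{example:x4_exceptional_collections}, run the three Kac-type cases (wild if some $\chi(\underline{d},\underline{d})<0$, tame if $\chi=0$, finite contribution if $\chi=1$), and settle finiteness of the admissible $\underline{d}$ by the rank equation $3\zeta-\alpha-\beta-\gamma-\delta-\epsilon=r$, the vanishing $\chi(\mathcal{E}(-2H))=0$, and $\chi(\underline{d},\underline{d})=1$, followed by the accumulation-point argument in $\mathbb{R}\mathbb{P}^6$ and the cone--hyperplane angle criterion. Your signature computation for $9I-2J$ and your predicted final inequality $a^2>b^2+c^2+d^2+e^2$ (equivalently $H^2>0$, which the paper deduces directly from ampleness of $H$) all match the paper.

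The one shortfall is that the decisive step is asserted rather than carried out: you write that the hyperplane--cone condition ``should'' reduce to $a^2-b^2-c^2-d^2-e^2>0$ ``by direct analogy with $X_3$.'' But that verification is essentially the entire content of the paper's proof of this theorem (the paper explicitly skips everything shared with $X_3$ and does only this computation): one must compute the six Euler characteristics $A,B,C,D,E,Z$ via Riemann--Roch and the defining sequences of $\mathcal{F}$ and $\mathcal{K}$, eliminate $\zeta_0$ using $3\zeta_0=\alpha_0+\cdots+\epsilon_0$ to get the linear relation at the accumulation point, pass to an eigenbasis of $9I-2J$ \emph{with the scalings needed to normalize the cone} (the paper's substitutions are not orthogonal, contrary to your phrasing, and the scalings do enter the hyperplane coefficients), and only then check $t^2>p^2+q^2+r^2+s^2$. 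Since there is no structural reason visible in advance that the mixed dependence of the coefficients on $(b,c,d,e)$ collapses so cleanly, leaving this unverified is a genuine gap in the write-up, though the paper's computation confirms that your guessed inequality is correct and the approach goes through.
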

\begin{proof}
Since the beginning of the proof is similar to the proof of Theorem \ref{theorem:ulrich_trichotomy_for_x3}, we skip directly into the part in Case 3 where the Ulrich bundle is written as part of a short exact sequence and the equations derived. Recall that the quiver $K_{5,1}$ is hyperbolic by Remark \ref{remark:hyperbolic_quivers}.

We suppose that there exists an Ulrich bundle $\mathcal{E}$ of rank $r$, assumed to be indecomposable without loss of generality, such that the associated representation of the quiver $K_{5,1}$ has dimension vector $\underline{d} = (\alpha, \beta, \gamma, \delta, \epsilon, \zeta)$ such that $\chi(\underline{d}, \underline{d}) > 0$. Note that we have $\chi(\underline{d}, \underline{d}) = 1$ in this case, as in the proof of Case 3 of Theorem \ref{theorem:ulrich_trichotomy_for_x3}. We want to show that there are only finitely many possible $\underline{d}$ for a given rank $r$.

By Remark \ref{remark:phi}, the Ulrich bundle $\mathcal{E}$ fits into a short exact sequence of the form
\[
 0 \to \mathcal{S} \oplus \mathcal{O}(-2\ell+\ell_1+\ldots+\ell_4+H)^{\epsilon} \to \mathcal{K}(H)^{\zeta} \to \mathcal{E} \to 0,
\]
where $\mathcal{S} = \mathcal{O}(\ell_4 - \ell+H)^{\alpha} \oplus \mathcal{O}(\ell_3 - \ell+H)^{\beta} \oplus \mathcal{O}(\ell_2 - \ell+H)^{\gamma} \oplus \mathcal{O}(\ell_1 - \ell+H)^{\delta}$. From the ranks, we get the equation
\begin{equation}
 3\zeta - \alpha - \beta - \gamma - \delta - \epsilon = r.
\end{equation}
Next, by Definition \ref{definition:ulrich_x4}, we have $\chi(\mathcal{E}(-2H))=0$. Twisting the short exact sequence above by $\mathcal{O}(-2H)$ and calculating Euler characteristics, we have
\begin{equation}\label{equation:x4_1}
 Z\zeta = A\alpha + B\beta + C\gamma + D\delta + E\epsilon,
\end{equation}
where
\begin{align*}
 A = \chi(\mathcal{O}(\ell_4 - \ell - H)) &= \frac{1}{2}(a^2-a-\Sigma-\Sigma_2+e), \\
 B = \chi(\mathcal{O}(\ell_3 - \ell - H)) &= \frac{1}{2}(a^2-a-\Sigma-\Sigma_2+d), \\
 C = \chi(\mathcal{O}(\ell_2 - \ell - H)) &= \frac{1}{2}(a^2-a-\Sigma-\Sigma_2+c), \\
 D = \chi(\mathcal{O}(\ell_1 - \ell - H)) &= \frac{1}{2}(a^2-a-\Sigma-\Sigma_2+b), \\
 E = \chi(\mathcal{O}(-2\ell+\ell_1+\ell_2+\ell_3+\ell_4 - H)) &= \frac{1}{2}(a^2+a+\Sigma-\Sigma_2), \\
 Z = \chi(\mathcal{K} (-H)) &= \frac{1}{2}(3a^2-3a-\Sigma-3\Sigma_2), 
\end{align*}
where $\Sigma=b+c+d+e$ and $\Sigma_2 = b^2 + c^2 + d^2 + e^2$.
Finally, the equation $\chi(\underline{d}, \underline{d}) = 1$ gives
\begin{equation}
 \alpha^2 + \beta^2 + \gamma^2 + \delta^2 + \epsilon^2 + \zeta^2 - \zeta \alpha - \zeta \beta - \zeta \gamma - \zeta \delta - \zeta \epsilon = 1
\end{equation}
by Example \ref{example:k51_euler_ringel_form}. We want to show that these three equations have finitely many solutions.

Suppose then that these three equations have infinitely many solutions. We may consider these solutions to be in $\mathbb{R}^6$, and hence in real projective space $\mathbb{R} \mathbb{P}^6$, with the extra coordinate given by $z$. Since these solutions form an infinite set in $\mathbb{R} \mathbb{P}^6$, and $\mathbb{R} \mathbb{P}^6$ is a compact topological space, the set of solutions must have an accumulation point, which will then necessarily satisfy $z=0$. Consider such an accumulation point $[\alpha_0 : \beta_0 : \gamma_0 : \delta_0 : \epsilon_0 : \zeta_0 : 0]$. It is then clear that $\alpha_0$, $\beta_0$, $\gamma_0$, $\delta_0$, $\epsilon_0$, and $\zeta_0$ satisfy the equations
\begin{equation}\label{equation:x4_2}
 3\zeta_0 = \alpha_0 + \beta_0 + \gamma_0 + \delta_0 + \epsilon_0,
\end{equation}
\begin{equation}\label{equation:x4_3}
 \alpha_0^2 + \beta_0^2 + \gamma_0^2 + \delta_0^2 + \epsilon_0^2 + \zeta_0^2 = \zeta_0 (\alpha_0 + \beta_0 + \gamma_0 + \delta_0 + \epsilon_0),
\end{equation}
and using equations \ref{equation:x4_1} and \ref{equation:x4_2}, the equation
\begin{equation}\label{equation:x4_4}
\begin{split}
 (-b-c-d+2e)\alpha_0 &+ (-b-c+2d-e)\beta_0 + (-b+2c-d-e)\gamma_0 \\
 &+ (2b-c-d-e)\delta_0 + (3a+2\Sigma)\epsilon_0 = 0
\end{split}
\end{equation}
as well. Using equation \ref{equation:x4_2}, equation \ref{equation:x4_3} can be written in the form
\begin{equation}\label{equation:x4_5}
 9(\alpha_0^2 + \beta_0^2 + \gamma_0^2 + \delta_0^2 + \epsilon_0^2)-2(\alpha_0 + \beta_0 + \gamma_0 + \delta_0 + \epsilon_0)^2 = 0.
\end{equation}
If we now make the transformations
\begin{align*}
 \alpha_0' &= \frac{3}{\sqrt{2}}(-\alpha_0 + \epsilon_0) \\
 \beta_0' &= \sqrt{\frac{3}{2}}(-\alpha_0+2\delta_0-\epsilon_0) \\
 \gamma_0' &= \frac{\sqrt{3}}{2}(-\alpha_0+3\gamma_0-\delta_0-\epsilon_0) \\
 \delta_0' &= \frac{3}{2\sqrt{5}}(-\alpha_0+4\beta_0-\gamma_0-\delta_0-\epsilon_0) \\
 \epsilon_0' &= \frac{1}{\sqrt{5}}(\alpha_0+\beta_0+\gamma_0+\delta_0+\epsilon_0),
\end{align*}
equation \ref{equation:x4_5} takes the form
\begin{equation}\label{equation:x4_6}
 (\alpha_0')^2 + (\beta_0')^2 + (\gamma_0')^2 + (\delta_0')^2 - (\epsilon_0')^2 = 0,
\end{equation}
and equation \ref{equation:x4_4} takes the form
\begin{equation}\label{equation:x4_7}
\begin{split}
 & \frac{1}{\sqrt{2}}(a+b+c+d)\alpha_0' + \frac{1}{\sqrt{6}}(-a+b-c-d-2e)\beta_0' + \\
 & \frac{1}{2\sqrt{3}}(-a-2b+2c-d-2e)\gamma_0' + \frac{1}{2\sqrt{5}}(-a-2b-2c+3d-2e)\delta_0' + \\
 & \frac{1}{\sqrt{5}}(3a+b+c+d+e)\epsilon_0' = 0.
\end{split}
\end{equation}
The proof will now be complete once we show that equations \ref{equation:x4_6} and \ref{equation:x4_7} have the unique solution $(0,0,0,0,0)$.

Equation \ref{equation:x4_6} describes a cone in $\mathbb{R}^5$ with the $\epsilon_0'$-axis as its axis. Equation \ref{equation:x4_7} describes a hyperplane in $\mathbb{R}^5$ with normal vector $\mathbf{n} = (p,q,r,s,t)$, where
\begin{align*}
 p &= \frac{1}{\sqrt{2}}(a+b+c+d), \\
 q &= \frac{1}{\sqrt{6}}(-a+b-c-d-2e), \\
 r &= \frac{1}{2\sqrt{3}}(-a-2b+2c-d-2e), \\
 s &= \frac{1}{2\sqrt{5}}(-a-2b-2c+3d-2e), \\
 t &= \frac{1}{\sqrt{5}}(3a+b+c+d+e).
\end{align*}
It is enough to show that the angle $\theta$ between $\mathbf{n}$ and the unit vector $\mathbf{u}$ in the direction of the $\epsilon_0'$-axis is smaller than 45\textdegree{}, which is equivalent to
\[
 \abs{\cos \theta} = \frac{\abs{\mathbf{n} \boldsymbol{\cdot} \mathbf{u}}}{\abs{\mathbf{n}}} > \frac{1}{\sqrt{2}},
\]
and hence, to
\begin{equation*}
 t^2 > p^2 + q^2 + r^2 + s^2.
\end{equation*}
Substituting the expressions for $p$,$q$, $r$, $s$, and $t$ into this inequality, we get the new inequality
\begin{equation*}
 a^2 - b^2 - c^2 - d^2 - e^2 > 0.
\end{equation*}
Since $H$ is ample, this inequality is satisfied.
\end{proof}


\end{document}